\theoremstyle{plain}
\newtheorem{lemma}{Lemma}[section]
\newtheorem{theorem}[lemma]{Theorem}
\newtheorem{prop}[lemma]{Proposition}
\newtheorem{exam}[lemma]{\normalfont \scshape
 Example}
\newcommand{\R}{\mathbb{R}}
\newcommand{\N}{\mathbb{N}}
\newcommand{\norm}[1]{\left\Vert#1\right\Vert}
\newcommand{\abs}[1]{\left\vert#1\right\vert}
\newcommand{\set}[1]{\left\{#1\right\}}
\newcommand{\bfx}{\bm{x}}
\newcommand{\bfzero}{\bm{0}}
\newcommand{\bfone}{\bm{1}}
\newcommand{\bfu}{\bm{u}}
\newcommand{\bfX}{\bm{X}}
\newcommand{\bfY}{\bm{Y}}
\newcommand{\bfZ}{\bm{Z}}
\newcommand{\bfeta}{\bm{\eta}}
\begin{document}
\title[The Space of $D$-Norms Revisited]{The Space of $D$-Norms Revisited}%
\author{Stefan Aulbach, Michael Falk and Maximilian Zott}
\address{University of W\"{u}rzburg, Institute of Mathematics,  Emil-Fischer-Str. 30, 97074 W\"{u}rzburg, Germany.}
\email{stefan.aulbach@uni-wuerzburg.de, michael.falk@uni-wuerzburg.de,\linebreak
maximilian.zott@uni-wuerzburg.de}

\subjclass[2010]{Primary 60G70, secondary 60E99}
\keywords{Multivariate extreme value theory, max-stable distributions,  $D$-norm, generator of $D$-norm, doubly stochastic matrix, Dirichlet distribution, Dirichlet $D$-norm}

\dedicatory{The final publication is available at Springer via
\url{http://dx.doi.org/10.1007/s10687-014-0204-y}}

\begin{abstract}
The theory of $D$-norms is an offspring of multivariate extreme value
theory. We present recent results on $D$-norms, which are completely
determined by a certain random vector called generator. In the first part
it is shown that the space of $D$-norms is a complete separable metric
space, if equipped with the Wasserstein-metric in a suitable way. Secondly,
multiplying a generator with a doubly stochastic matrix yields another
generator. An iteration of this multiplication provides a sequence of
$D$-norms and we compute its limit. Finally, we consider a parametric
family of $D$-norms, where we assume that the generator follows a symmetric
Dirichlet distribution. This family covers the whole range between complete
dependence and independence.
\end{abstract}
\maketitle

\section{Introduction}\label{sec:Introduction}
A norm $\norm\cdot_D$ on $\R^d$ is a $D$-norm, if there exists a random
variable (rv) $\bfZ=(Z_1,\dots,Z_d)$ with $Z_i\ge 0$, $E(Z_i)=1$, $1\le i\le
d$, such that
\begin{equation*}
\norm{\bfx}_D=E\left(\max_{1\le i\le d}\left(\abs{x_i}Z_i\right)\right)
=E\left(\norm{\bfx\bfZ}_\infty\right),
\end{equation*}
$\bfx=(x_1,\dots,x_d)\in\R^d$. In this case $\bfZ$ is called \emph{generator}
of $\norm\cdot_D$. By $\norm{\bfx}_\infty=\max_{1\le i\le d}\abs{x_i}$ we
denote the usual sup-norm on $\R^d$; all operations on vectors such as
$\bfx\bfZ=(x_iZ_i)_{i=1}^d$ are meant componentwise.

Examples of $D$-norms are
\begin{enumerate}
  \item the sup-norm $\norm{\bfx}_\infty = \max_{1\le i\le d}\abs{x_i}$,
      which is generated by $\bfZ=(1,\dots,1)$.
  \item the $L_1$-norm $\norm{\bfx}_1=\sum_{i=1}^d\abs{x_i}$, generated by a
      random permutation of $(d,0,\dots,0)\in\R^d$ with equal probability
      $1/d$.
  \item the usual logistic-norm $
      \norm{\bfx}_\lambda=\left(\sum_{i=1}^d\abs{x_i}^\lambda\right)^{1/\lambda}$,
      $1<\lambda<\infty$. An explicit generator was only quite recently
      found: Let $X_1,\dots,X_d$ be independent and identically
      Fr\'{e}chet-distributed rv, i.e., $P(X_i\le x)=$ $\exp(-x^{-\lambda})$,
      $x>0$, $\lambda>1$. Then $\bfZ=(Z_1,\dots,Z_d)$ with
      \begin{equation*}\label{eq:frechet_generator}
      Z_i:=\frac{X_i}{\Gamma(1-p^{-1})},\quad i=1,\dots,d,
      \end{equation*}
      generates $\norm\cdot_{\lambda}$ where $\Gamma$ denotes the gamma
      function.
\end{enumerate}

The theory of $D$-norms is an offspring of multivariate extreme value theory:
A distribution function (df) $G$ on $\R^d$ is a \emph{standard max-stable}
(sms) or \emph{standard extreme value df} if
\begin{gather*}
G(\bfx)=G^n\left(\frac{\bfx}n\right),\qquad \bfx\in\R^d,\,n\in\N,\\
\intertext{and for $1\le i\le d$}
G_i(\bfx):=G(0,\dots,0,x_i,0,\dots,0)=\exp(x_i),\qquad \bfx\le\bfzero\in\R^d.
\end{gather*}
The following characterization of a sms df in terms of a $D$-norm is a
consequence of the results by \citet{pick75}, \citet{dehar77} and
\citet{vatan85}.

\begin{theorem}[Pickands, de Haan-Resnick, Vatan]
A df $G$ on $\R^d$ is a sms df iff there exists a $D$-norm $\norm\cdot_D$ on
$\R^d$ such that
\[
G(\bfx)=\exp\left(-\norm{\bfx}_D\right),\qquad \bfx\le\bfzero\in\R^d.
\]
\end{theorem}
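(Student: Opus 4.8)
The plan is to treat the two implications separately: the ``if'' part is a short probabilistic computation, while the ``only if'' part is read off from the classical exponent–measure representation of multivariate max-stable laws due to Pickands, de Haan--Resnick and Vatan. It is convenient to note first that any $\bfZ=(Z_1,\dots,Z_d)$ with $Z_i\ge0$, $E(Z_i)=1$ does define a norm: homogeneity is clear, the triangle inequality follows from $\max_i\abs{x_i+y_i}Z_i\le\max_i\abs{x_i}Z_i+\max_i\abs{y_i}Z_i$, and $\norm\bfx_D=0$ forces $x_iZ_i=0$ a.s.\ for each $i$, hence $x_i=0$ since $E(Z_i)=1>0$.

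\emph{Sufficiency.} Assume $\norm\cdot_D$ has generator $\bfZ$. To exhibit an sms df with lower tail $\exp(-\norm\cdot_D)$, I would realize it probabilistically. Let $\set{(V_k,\bfZ^{(k)}):k\in\N}$ be a Poisson point process on $(0,\infty)\times[0,\infty)^d$ with intensity measure $v^{-2}\,dv$ times the law of $\bfZ$ (the $\bfZ^{(k)}$ being i.i.d.\ copies of $\bfZ$), put $Y_i:=\sup_{k}V_kZ_i^{(k)}$ and $X_i:=-1/Y_i$. A routine Poisson void-probability computation gives, for $\bfy>\bfzero$,
\[
P(\bfY\le\bfy)=\exp\left(-E\left(\max_{1\le i\le d}\frac{Z_i}{y_i}\right)\right),
\]
hence $P(\bfX\le\bfx)=\exp(-\norm\bfx_D)$ for $\bfx<\bfzero$ and, by continuity, for all $\bfx\le\bfzero$. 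Thus the df of $\bfX$ agrees with $\exp(-\norm\cdot_D)$ on $(-\infty,\bfzero]^d$; homogeneity of $\norm\cdot_D$ yields $G(\bfx)=\exp(-n\norm{\bfx/n}_D)=G^n(\bfx/n)$, and $\norm{\bfe_i}_D=E(Z_i)=1$ gives $G_i(\bfx)=\exp(x_i)$, so $G$ is sms.

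\emph{Necessity.} Let $G$ be sms and set $\ell(\bfu):=-\log G(-\bfu)$ for $\bfu\ge\bfzero$; max-stability translates into the positive homogeneity $\ell(t\bfu)=t\ell(\bfu)$, $t\ge0$. Since $G$ is max-infinitely divisible, the cited classical results provide an exponent measure for $G$ which, by the homogeneity, admits a polar decomposition: there is a finite measure $S$ on the unit simplex $\Delta=\set{\bfs\ge\bfzero:\sum_{i=1}^d s_i=1}$ with
\[
\ell(\bfu)=\int_\Delta\max_{1\le i\le d}(u_is_i)\,S(d\bfs),
\]
and the standard exponential margins of $G$ force $\int_\Delta s_i\,S(d\bfs)=1$ for every $i$; summing over $i$ gives $S(\Delta)=d$. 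Hence $S/d$ is a probability measure, and if $\bfS^\ast$ has law $S/d$ then $\bfZ:=d\,\bfS^\ast$ satisfies $Z_i\ge0$, $E(Z_i)=\int_\Delta s_i\,S(d\bfs)=1$, and $E(\max_i\abs{x_i}Z_i)=\int_\Delta\max_i(\abs{x_i}s_i)\,S(d\bfs)=\ell(\abs{\bfx})$. Therefore $\norm\bfx_D:=E(\max_i\abs{x_i}Z_i)$ is a $D$-norm and $G(\bfx)=\exp(-\ell(-\bfx))=\exp(-\norm\bfx_D)$ for $\bfx\le\bfzero$.

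The only genuinely nontrivial ingredient is the exponent-measure representation with its polar decomposition used in the ``only if'' part; this is precisely the content of the work of Pickands, de Haan--Resnick and Vatan, so in the write-up I would either invoke it directly or, if a self-contained account is wanted, sketch the point-process proof that an sms df is max-infinitely divisible. Everything else — the void-probability calculation, the homogeneity manipulations, and the bookkeeping that extracts the generator from the spectral measure — is routine.
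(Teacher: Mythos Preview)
The paper does not actually supply its own proof of this theorem: it is stated in the introduction as a known characterization, with the sentence preceding it explaining that it ``is a consequence of the results by \citet{pick75}, \citet{dehar77} and \citet{vatan85}'', and the text moves on immediately afterwards. So there is nothing to compare your argument against line by line.

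That said, your sketch is correct and is exactly the standard way one unpacks those references. The sufficiency direction via the Poisson point process construction is the de Haan spectral representation (the paper itself alludes to this a few paragraphs later, citing \citet[Lemma 9.4.7]{dehaf06}), and your void-probability computation and the verification of the sms axioms from homogeneity and $E(Z_i)=1$ are routine and right. For the necessity direction you correctly isolate the one substantial input --- the exponent-measure/spectral decomposition of a max-stable law --- and then do the bookkeeping that turns the angular measure $S$ into a generator $\bfZ=d\,\bfS^\ast$; the normalization $\int_\Delta s_i\,S(d\bfs)=1$ coming from the standard exponential margins is exactly what makes $E(Z_i)=1$. This is precisely how the paper would have you read its citation, and indeed the paper later (Section~\ref{sec:metrization_of_D-norm-space}) points to \citet[Section 4.2]{fahure10} for the de Haan--Resnick representation when it needs the existence of a generator with $\norm{\bfZ}_1=d$, which is the same construction you carry out.
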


The generator $\bfZ$ of a $D$-norm $\norm\cdot_D$ is in general not uniquely
determined, even its distribution is not, cf.
\eqref{eqn:representation_of_Dirichlet_D-norm} in Section
\ref{sec:dirichlet_distribution}. The sup-norm $\norm\cdot_\infty$, for
example, can be generated by every rv $\bfZ=(Z,\dots,Z)$ with constant entry
$Z$ which is a positive rv with expectation 1.

The particular value
\[
\norm{\bfone}_D=E\left(\max_{1\le i\le d}Z_i\right)
\]
of a $D$-norm on $\R^d $ with generator $\bfZ$ is the \emph{generator
constant} or \emph{extremal coefficient}, cf. \citet{smith90}, where $\bfone
= (1,\dots,1)$. While a generator is in general not uniquely determined by
the $D$-norm, the generator constant obviously is. It is a measure of
dependence between the margins of the multivariate sms df
$G(\bfx):=\exp\left(-\norm{\bfx}_D\right)$, $\bfx\le\bfzero\in\R^d$, see
\citet[Section 4.4]{fahure10}. We have by \citeauthor{taka88}'s
(\citeyear{taka88}) theorem
\[
\norm\cdot_D=\norm\cdot_1\iff \norm{\bfone}_D=d,
\]
which is the case of independence of the margins of $G$, and
\[
\norm\cdot_D=\norm\cdot_\infty\iff \norm{\bfone}_D=1,
\]
which is the case of complete dependence of the margins. Note that
\begin{equation} \label{eqn:D-norm_bounds}
\norm\cdot_\infty\le\norm\cdot_D\le\norm\cdot_1
\end{equation}
for any $D$-norm, with the lower and the upper bound being $D$-norms
themselves.

A rv $\bfeta$ that follows the sms df
$G(\bfx)=\exp\left(-\norm{\bfx}_D\right)$, $\bfx\le\bfzero\in\R^d$, can be
generated in the following way. Consider a Poisson point process on
$[0,\infty)$ with mean measure $r^{-2}dr$. Let $V_i$, $i\in\N$, be a
realization of this point process. Consider independent copies
$\bfZ^{(1)},\bfZ^{(2)},\dots$ of a generator $\bfZ$ of the $D$-norm
$\norm{\cdot}_D$, which are also independent of the Poisson process. Then we have
\[
\bfeta=_D-\frac 1{\sup_{i\in\N} V_i\bfZ^{(i)}},
\]
which is a consequence of \citet[Lemma 9.4.7]{dehaf06} and elementary
computations.

Let $\norm\cdot_{D_1}$, $\norm\cdot_{D_2}$ be two $D$-norms on $\R^d$ with
generators $\bfZ^{(1)}$, $\bfZ^{(2)}$. Suppose that these generators are
independent. Then the  product $\bfZ:=\bfZ^{(1)}\bfZ^{(2)}$, taken
componentwise, defines the generator of a $D$-norm $\norm\cdot_{D_1\times
D_2}$, say.  This entails the definition of a multiplication type operation
on the set of $D$-norms; note that this product $D$-norm does not depend on
the special choice of generators. A $D$-norm $\norm\cdot_D$ is called
\emph{idempotent}, if $\norm\cdot_{D\times D}=\norm\cdot_D$. The sup-norm
$\norm\cdot_\infty$ and the $L_1$-norm $\norm\cdot_1$ are idempotent
$D$-norms. Iterating the multiplication provides a track of $D$-norms, whose
limit exists and is again a $D$-norm. If this iteration is repeatedly done on
the same $D$-norm, then the limit of the track is idempotent, see
\citet{falk13}, where also the set of idempotent $D$-norms is characterized.

In Section \ref{sec:metrization_of_D-norm-space} of the present paper we
define a metric on the space of $D$-norms such that it becomes a complete
metric space. Convergence of $D$-norms is then equivalent with weak
convergence of the corresponding generators. Multiplying a generator with a
bistochastic or doubly stochastic matrix generates a new generator and, thus,
another $D$-norm. Iterating the multiplication leads to a sequence of
$D$-norms, whose limit is established in Section
\ref{sec:bistochastic_matrices}. A particularly interesting parametric model
for generators is provided by the symmetric Dirichlet-distributions. In
Section \ref{sec:dirichlet_distribution} we investigate this parametric
family in detail.

\section{Metrization of the Space of $D$-Norms} \label{sec:metrization_of_D-norm-space}

Denote by $\mathcal Z_{\norm\cdot_D}$ the set of all generators of a given
$D$-norm $\norm\cdot_D$ on $\R^d$. The proof of the de Haan-Resnick
representation of a max-stable multivariate extreme value df as in
\citet[Section 4.2]{fahure10} implies the following result.

\begin{lemma}
Each set $\mathcal Z_{\norm\cdot_D}$ contains a generator $\bfZ$ with the
additional property $\norm{\bfZ}_1=d$. The distribution of this $\bfZ$ is
uniquely determined.
\end{lemma}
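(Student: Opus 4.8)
The plan is to use the point process / spectral decomposition that underlies the de Haan--Resnick representation. Recall that if $\bfZ$ is any generator of $\norm\cdot_D$, then the random vector $\bfZ/\norm{\bfZ}_1$ (which is well defined since $\norm{\bfZ}_1 = \sum_i Z_i \ge \max_i Z_i \ge 0$, and $\norm{\bfZ}_1 = 0$ only on a null set because $E(Z_i)=1$) takes values in the unit simplex $S_d = \{\bfs \ge \bfzero : \norm{\bfs}_1 = 1\}$. First I would define the \emph{spectral measure} as the distribution of $\bfZ/\norm{\bfZ}_1$ on $S_d$, but weighted by $\norm{\bfZ}_1$; concretely, let $\sigma$ be the finite measure on $S_d$ given by
\[
\sigma(A) = E\!\left(\norm{\bfZ}_1 \,\mathbf{1}\{\bfZ/\norm{\bfZ}_1 \in A\}\right), \qquad A \subseteq S_d \text{ Borel}.
\]
The normalization $E(Z_i) = 1$ for each $i$ translates into the moment conditions $\int_{S_d} s_i \, \sigma(d\bfs) = 1$ for $1 \le i \le d$, hence $\sigma(S_d) = d$. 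Then I would check that $\norm\cdot_D$ is recovered from $\sigma$ alone:
\[
\norm{\bfx}_D = E\!\left(\max_i \abs{x_i} Z_i\right) = E\!\left(\norm{\bfZ}_1 \max_i \abs{x_i}\,\frac{Z_i}{\norm{\bfZ}_1}\right) = \int_{S_d} \max_{1\le i\le d}\bigl(\abs{x_i} s_i\bigr)\, \sigma(d\bfs).
\]

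Next, for the existence part, I would \emph{define} a new generator $\bfZ^\ast$ by letting $\bfZ^\ast = d\,\bfS$ where $\bfS$ has distribution $\sigma/d$ on $S_d$. Then $\norm{\bfZ^\ast}_1 = d$ almost surely, $E(Z_i^\ast) = d \int_{S_d} s_i\,(\sigma/d)(d\bfs) = 1$, and by the displayed recovery formula $\bfZ^\ast$ generates the same $\norm\cdot_D$ (the map $\bfs \mapsto \max_i(\abs{x_i}s_i)$ is homogeneous of degree one, so scaling by $d$ and dividing the measure by $d$ cancel). This proves the first sentence of the Lemma.

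For uniqueness, suppose $\bfZ$ is any generator with $\norm{\bfZ}_1 = d$ a.s.; I must show its distribution equals that of $\bfZ^\ast$, i.e. that the distribution $\mu_{\bfZ}$ of $\bfZ/d$ on $S_d$ equals $\sigma/d$. The key identity is that for \emph{every} generator (normalized or not) the associated measure $\sigma$ is the \emph{same}, because it is determined by $\norm\cdot_D$: indeed $\int_{S_d} \max_i(\abs{x_i}s_i)\,\sigma(d\bfs) = \norm{\bfx}_D$ for all $\bfx$, and the family of functions $\bfs \mapsto \max_i(\abs{x_i}s_i)$, $\bfx \in \R^d$, is measure-determining on $S_d$. (This last point I would justify the way it is done in the de Haan--Resnick representation, e.g. by noting these functions, together with constants, generate enough of $C(S_d)$ via Stone--Weierstrass-type or inclusion--exclusion arguments to separate finite measures with the same total mass and margin moments; alternatively one can invoke the cited uniqueness of the exponent measure of a max-stable law.) Granting this, a generator $\bfZ$ with $\norm{\bfZ}_1 = d$ a.s. has $d\,\mu_{\bfZ} = \sigma$, so $\mu_{\bfZ}$ is forced, and hence the distribution of $\bfZ$ is uniquely determined.

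The main obstacle is the measure-determining claim — that the integrals $\int \max_i(\abs{x_i}s_i)\,\sigma(d\bfs)$ over all $\bfx$ pin down the finite measure $\sigma$ on $S_d$. This is exactly the content of the uniqueness half of the de Haan--Resnick / Pickands spectral representation, which the excerpt already invokes, so I would lean on \citet[Section 4.2]{fahure10} for it rather than reprove it; the remaining steps (the moment bookkeeping $\sigma(S_d)=d$, the homogeneity rescaling, and checking $E(Z_i^\ast)=1$) are routine.
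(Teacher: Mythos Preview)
Your proposal is correct and follows precisely the route the paper indicates: the paper gives no self-contained proof but simply cites the de Haan--Resnick representation from \citet[Section~4.2]{fahure10}, and your sketch (form the spectral measure $\sigma$ on the simplex from an arbitrary generator, rescale to obtain $\bfZ^\ast=d\bfS$ with $\bfS\sim\sigma/d$, then invoke uniqueness of the spectral measure) is exactly how that representation yields the lemma. The one point you flag as the obstacle---that the family $\bfs\mapsto\max_i(\abs{x_i}s_i)$ determines $\sigma$---is indeed the substantive content of the cited result, so deferring to it is appropriate here.
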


Let $\mathbb{P}$ be the set of all probability measures on $S_d:=\set{\bfx\ge
\bfzero\in\R^d:\,\norm{\bfx}_1=d}$. We, thus, can identify the set $\mathbb
D$ of $D$-norms on $\R^d$ with the subset $\mathbb P_D$ of those probability
distributions $P\in\mathbb P$ which satisfy the additional condition
$\int_{S_d}x_i\,P(d\bfx)=1$, $i=1,\dots,d$.

Denote by $d_W(P,Q)$ the \emph{Wasserstein metric} between two probability
distributions on $S_d$, i.e.,
\begin{equation*}
  d_W(P,Q)
  :=\inf\set{E\left(\norm{\bfX-\bfY}_1\right):\, \bfX\mathrm{\ has\ distribution\ }P,\,\bfY \mathrm{\ has\ distribution\ }Q}.
\end{equation*}
As $S_d$, equipped with an arbitrary norm $\norm\cdot$, is a complete
separable space, the metric space $\left(\mathbb P,d_W\right)$ is complete
and separable as well; see, e.g., \citet{bolley08}.

\begin{lemma}\label{lem:completeness_of_P_D}
The subspace $\left(\mathbb P_D,d_W\right)$ of $\left(\mathbb P,d_W\right)$
is also separable and complete.
\end{lemma}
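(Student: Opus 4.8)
The plan is to show that $\mathbb P_D$ is a closed subset of the complete separable metric space $(\mathbb P, d_W)$; since a closed subset of a complete metric space is complete, and a subset of a separable metric space is separable, both conclusions follow at once. Separability is immediate because any subset of a separable metric space is separable, so the real work is closedness.

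To prove $\mathbb P_D$ is closed, I would take a sequence $P_n \in \mathbb P_D$ with $d_W(P_n, P) \to 0$ for some $P \in \mathbb P$, and show $P \in \mathbb P_D$, i.e.\ that $\int_{S_d} x_i\, P(d\bfx) = 1$ for each $i = 1,\dots,d$. The key fact I would use is that Wasserstein convergence (with the $L_1$ cost on the bounded-below, in fact compact, set $S_d \subset [0,d]^d$) implies weak convergence of $P_n$ to $P$; moreover, since $S_d$ is compact, every coordinate map $\bfx \mapsto x_i$ is bounded and continuous on $S_d$, so weak convergence gives $\int_{S_d} x_i\, P_n(d\bfx) \to \int_{S_d} x_i\, P(d\bfx)$. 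Because each $P_n \in \mathbb P_D$ satisfies $\int x_i\, P_n(d\bfx) = 1$, the limit satisfies $\int x_i\, P(d\bfx) = 1$ as well, hence $P \in \mathbb P_D$. (Alternatively, one can avoid invoking weak convergence directly: couple $\bfX_n \sim P_n$ and $\bfY \sim P$ with $E\norm{\bfX_n - \bfY}_1 \to 0$; then $|E(X_{n,i}) - E(Y_i)| \le E|X_{n,i} - Y_i| \le E\norm{\bfX_n - \bfY}_1 \to 0$, and since $E(X_{n,i}) = 1$ we get $E(Y_i) = 1$.)

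The main obstacle — though a mild one — is making sure the passage from $d_W$-convergence to convergence of the coordinate means is fully justified: one needs that the first moments behave well under $d_W$-convergence, which is exactly where the boundedness of $S_d$ (equivalently, that $S_d$ lies in the compact set $\{\bfx \ge \bfzero : \norm{\bfx}_1 = d\} \subseteq [0,d]^d$) is used, so that there is no loss of mass or escape of moments to infinity. The coupling argument sketched above handles this cleanly and directly, so I would present that as the proof: fix $i$, choose near-optimal couplings realizing $d_W(P_n,P)$ up to $1/n$, and estimate $|1 - E(Y_i)| = |E(X_{n,i}) - E(Y_i)| \le E(\norm{\bfX_n-\bfY}_1) \to 0$, forcing $E(Y_i) = 1$ for all $i$ and hence $P \in \mathbb P_D$. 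This establishes that $\mathbb P_D$ is closed in $(\mathbb P, d_W)$, and completeness and separability of $(\mathbb P_D, d_W)$ follow.
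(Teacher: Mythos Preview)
Your proof is correct, and for completeness it is essentially the paper's argument: the paper also takes a $d_W$-convergent sequence $P_n\to P$ in $\mathbb P$, couples $\bfX^{(n)}\sim P_n$ with $\bfX\sim P$, and bounds $\sum_{i=1}^d\bigl|\int x_i\,P(d\bfx)-1\bigr|\le E\bigl(\norm{\bfX-\bfX^{(n)}}_1\bigr)$, then passes to the infimum over couplings to get $\le d_W(P,P_n)\to 0$. This is exactly your ``alternative'' coupling argument.

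The one genuine difference is separability. You invoke the hereditary property---every subspace of a separable metric space is separable---which is clean and requires no further work. The paper instead constructs an explicit countable set: it takes a countable dense $\mathcal P\subset\mathbb P$, represents each $P\in\mathcal P$ by a random vector $\bfY$ on $S_d$, normalizes to $\bfZ=\bfY/E(\bfY)$ to obtain a generator, and asserts that the corresponding $D$-norms form a countable dense subset of $\mathbb P_D$. Your route is shorter and avoids having to check that this normalization map is continuous enough to preserve density; the paper's route, on the other hand, would (if fleshed out) give a concrete dense family, which could be useful if one ever wanted explicit approximants.
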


\begin{proof}
Let $P_n$, $n\in\N$, be a sequence in $\mathbb P_D$, which converges with
respect to $d_W$ to $P\in\mathbb P$. We show that $P\in\mathbb P_D$. Let the
rv $\bfX$ have distribution $P$ and let $\bfX^{(n)}$ have distribution $P_n$,
$n\in\N$. Then we have
\begin{align*}
\sum_{i=1}^d \abs{\int_{S_d} x_i\,P(d\bfx) - 1}&= \sum_{i=1}^d \abs{\int_{S_d} x_i\,P(d\bfx) - \int_{S_d} x_i\,P_n(d\bfx)}\\
&= \sum_{i=1}^d \abs{ E\left(X_i-X_i^{(n)}\right)}\\
&\le E\left( \sum_{i=1}^d \abs{ X_i-X_i^{(n)}}\right)\\
&= E\left(\norm{\bfX-\bfX^{(n)}}_1\right),\qquad n\in\N.
\end{align*}
As a consequence we obtain
\[
\sum_{i=1}^d \abs{\int_{S_d} x_i\,P(d\bfx) - 1}\le d_W(P,P_n)\to_{n\to\infty} 0,
\]
and, thus, $P\in\mathbb P_D$. The separability of $\mathbb P_D$ can be seen
as follows. Let $\mathcal P$ be a countable and dense subset of $\mathbb P$.
Identify each distribution $P$ in $\mathcal P$ with a rv $\bfY$ on $S_d$ that
follows this distribution $P$. Put $\bfZ=\bfY/E(\bfY)$, where we can assume
that each component of $\bfY$ has positive expectation. This yields a
countable subset of $\mathbb P_D$, which is dense.
\end{proof}

We can now define the distance between two $D$-norms $\norm\cdot_{D_1}$,
$\norm\cdot_{D_2}$ on $\R^d$ by
\begin{align*}
&d_W\left(\norm\cdot_{D_1}, \norm\cdot_{D_2}\right)\\
&:=\inf\set{ E\left(\norm{\bfZ^{(1)}-\bfZ^{(2)}}_1\right):\,\bfZ^{(i)}\mathrm{\ generates\ }\norm\cdot_{D_i},\,\norm{\bfZ^{(i)}}_1=d,\,i=1,2}.
\end{align*}
The space $\mathbb D$ of $D$-norms on $\R^d$, equipped with the distance
$d_W$, is by Lemma \ref{lem:completeness_of_P_D} a complete and separable
metric space.

For the rest of this section we restrict ourselves to generators $\bfZ$ of
$D$-norms on $\R^d$ that satisfy $\norm{\bfZ}_1=d$.

\begin{lemma}
Let $\norm\cdot_{D_n}$, $n\in\N\cup\set{0}$, be a sequence of $D$-norms on
$\R^d$ with corresponding generators $\bfZ^{(n)}$, $n\in\N\cup\set{0}$. Then
we have the equivalence
\[
d_W\left(\norm\cdot_{D_n},\norm\cdot_{D_0}\right)\to_{n\to\infty} 0 \iff \bfZ^{(n)}\to_D\bfZ^{(0)},
\]
where $\to_D$ denotes ordinary convergence in distribution.
\end{lemma}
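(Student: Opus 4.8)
The plan is to exploit the general fact that the Wasserstein metric $d_W$ on probability measures on the compact space $S_d$ metrizes weak convergence, and then to upgrade this from convergence of the canonical ($\norm\cdot_1 = d$) generators to the corresponding statement about the $D$-norms. First I would recall that $S_d$ is a bounded closed subset of $\R^d$, hence compact, so every probability measure on $S_d$ has finite first moment and, by the standard theory (see e.g. \citet{bolley08}), $d_W(P_n,P)\to 0$ if and only if $P_n\to P$ weakly. In the notation of the excerpt, each $D$-norm $\norm\cdot_{D_n}$ is identified via Lemma~\ref{lem:completeness_of_P_D} with the unique $P_n\in\mathbb P_D$ that is the distribution of the canonical generator $\bfZ^{(n)}$ with $\norm{\bfZ^{(n)}}_1=d$, and $d_W(\norm\cdot_{D_n},\norm\cdot_{D_0})$ equals $d_W(P_n,P_0)$ — here one uses that the infimum in the definition of the distance between $D$-norms, taken over all admissible pairs of generators, coincides with the Wasserstein infimum over couplings of the two fixed canonical distributions, because by Lemma~2.1 the canonical generator is distributionally unique. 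Once that identification is in place, $d_W(\norm\cdot_{D_n},\norm\cdot_{D_0})\to 0 \iff P_n\to P_0$ weakly $\iff \bfZ^{(n)}\to_D\bfZ^{(0)}$, and the lemma follows.

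More concretely I would organize the argument in three steps. Step 1: reduce $d_W$ between $D$-norms to $d_W$ between the associated measures in $\mathbb P_D$, invoking uniqueness of the canonical generator. Step 2: show $\Rightarrow$. If $d_W(P_n,P_0)\to 0$, pick couplings $(\bfX^{(n)},\bfY^{(n)})$ with $\bfX^{(n)}\sim P_n$, $\bfY^{(n)}\sim P_0$ and $E\norm{\bfX^{(n)}-\bfY^{(n)}}_1\to 0$; then for any bounded Lipschitz $f$ on $S_d$ we get $|E f(\bfX^{(n)})-Ef(\bfY^{(n)})|\le \mathrm{Lip}(f)\,E\norm{\bfX^{(n)}-\bfY^{(n)}}_1\to 0$, so $P_n\to P_0$ weakly, i.e. $\bfZ^{(n)}\to_D\bfZ^{(0)}$. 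Step 3: show $\Leftarrow$. Suppose $\bfZ^{(n)}\to_D\bfZ^{(0)}$. Since all measures live on the compact set $S_d$, the family $\{P_n\}$ is tight and uniformly integrable (indeed $\norm{\bfx}_1\equiv d$ on $S_d$), so weak convergence is equivalent to convergence in the Wasserstein metric; hence $d_W(P_n,P_0)\to 0$, which is $d_W(\norm\cdot_{D_n},\norm\cdot_{D_0})\to 0$. One may alternatively give a direct coupling argument using Skorokhod's representation theorem: realize $\bfX^{(n)}\to\bfX^{(0)}$ almost surely on a common probability space, note $\norm{\bfX^{(n)}-\bfX^{(0)}}_1\le 2d$ is bounded, and apply dominated convergence to conclude $E\norm{\bfX^{(n)}-\bfX^{(0)}}_1\to 0$, so a fortiori $d_W(P_n,P_0)\to 0$.

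The main obstacle — really the only non-routine point — is Step 1: making precise that the $\inf$ defining $d_W(\norm\cdot_{D_1},\norm\cdot_{D_2})$, which ranges over \emph{all} generators with $\norm\cdot_1=d$ of each $D$-norm, is the same as the Wasserstein $\inf$ over couplings of the two \emph{fixed} canonical laws $P_1,P_2$. This is exactly where Lemma~2.1 enters: since the distribution of a canonical generator is uniquely determined, "ranging over generators with $\norm\cdot_1=d$" does not enlarge the class of joint laws beyond couplings of the single pair $(P_1,P_2)$, and conversely every such coupling is realized by an admissible pair $(\bfZ^{(1)},\bfZ^{(2)})$. Everything after that is the textbook equivalence of Wasserstein convergence and weak convergence on a compact (hence Polish, finite-first-moment) space, for which I would simply cite \citet{bolley08}.
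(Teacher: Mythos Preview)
Your proof is correct and rests on the same underlying fact as the paper's: on $S_d$ the Wasserstein distance metrizes weak convergence, so once the $D$-norm distance is identified with $d_W$ between the canonical generator laws (your Step~1, which the paper handles by the blanket restriction to generators with $\norm{\bfZ}_1=d$ stated just before the lemma), the equivalence is immediate. The only difference is in how that key fact is justified. The paper invokes the general characterization from \citet{villani09} that $d_W(P_n,P_0)\to 0$ is equivalent to weak convergence \emph{plus} convergence of first moments $\int\norm{\bfx}_1\,dP_n\to\int\norm{\bfx}_1\,dP_0$, and then observes that on $S_d$ one has $\norm{\bfx}_1\equiv d$, so the moment condition is trivially satisfied. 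Your route---compactness of $S_d$ together with either a direct Lipschitz/Skorokhod argument or a citation to \citet{bolley08}---is a bit more hands-on but equally valid; the paper's version is shorter because it outsources both directions to a single cited theorem and dispatches the extra moment hypothesis in one line.
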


\begin{proof}
Convergence of probability measures $P_n$ to $P_0$ with respect to the
Wasserstein-metric is equivalent with weak convergence together with
convergence of the moments
\[
\int_{S_d}\norm{\bfx}_1\,P_n(d\bfx) \to_{n\to\infty} \int_{S_d}\norm{\bfx}_1\,P_0(d\bfx),
\]
see, e.g., \citet{villani09}. But as we have for each probability measure
$P\in\mathbb P_D$
\[
\int_{S_d}\norm{\bfx}_1\,P(d\bfx)=\int_{S_d}d\,P(d\bfx)=d,
\]
convergence of the moments is automatically satisfied.
\end{proof}

\begin{lemma}
We have for arbitrary $D$-norms $\norm\cdot_{D_1}$, $\norm\cdot_{D_2}$ on
$\R^d$ the bound
\begin{equation*}
\norm{\bfx}_{D_1}\le \norm{\bfx}_{D_2} + \norm{\bfx}_\infty d_W\left(\norm\cdot_{D_1}, \norm\cdot_{D_2}\right)
\end{equation*}
and, thus,
\[
\sup_{\bfx\in\R^d,\norm{\bfx}_\infty\le r}\abs{\norm{\bfx}_{D_1} -
\norm{\bfx}_{D_2}} \le r\, d_W\left(\norm\cdot_{D_1},
\norm\cdot_{D_2}\right),\qquad r\ge 0.
\]
\end{lemma}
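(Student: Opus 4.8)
The plan is to fix an arbitrary pair of generators $\bfZ^{(1)}$, $\bfZ^{(2)}$ of $\norm\cdot_{D_1}$, $\norm\cdot_{D_2}$, respectively, that are defined on a common probability space and satisfy $\norm{\bfZ^{(i)}}_1=d$, and then to estimate $\norm{\bfx}_{D_1}$ directly from the representation $\norm{\bfx}_{D_1}=E(\norm{\bfx\bfZ^{(1)}}_\infty)$. The key inequality is the pointwise (in $\omega$) triangle-type bound
\[
\norm{\bfx\bfZ^{(1)}}_\infty \le \norm{\bfx\bfZ^{(2)}}_\infty + \norm{\bfx(\bfZ^{(1)}-\bfZ^{(2)})}_\infty,
\]
which follows from $\abs{x_iZ_i^{(1)}}\le\abs{x_iZ_i^{(2)}}+\abs{x_i}\,\abs{Z_i^{(1)}-Z_i^{(2)}}$ for each coordinate, followed by taking the maximum over $i$. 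Bounding further $\abs{x_i}\le\norm{\bfx}_\infty$ gives $\norm{\bfx(\bfZ^{(1)}-\bfZ^{(2)})}_\infty\le\norm{\bfx}_\infty\norm{\bfZ^{(1)}-\bfZ^{(2)}}_\infty\le\norm{\bfx}_\infty\norm{\bfZ^{(1)}-\bfZ^{(2)}}_1$. Taking expectations yields
\[
\norm{\bfx}_{D_1}\le\norm{\bfx}_{D_2}+\norm{\bfx}_\infty\,E\!\left(\norm{\bfZ^{(1)}-\bfZ^{(2)}}_1\right).
\]

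Now I would take the infimum over all admissible pairs of generators on the right-hand side. Since the left-hand side and $\norm{\bfx}_{D_2}$ do not depend on the choice of generators, and the infimum of $E(\norm{\bfZ^{(1)}-\bfZ^{(2)}}_1)$ over such couplings is exactly $d_W(\norm\cdot_{D_1},\norm\cdot_{D_2})$ by the definition given just before this lemma, we obtain the first claimed bound
\[
\norm{\bfx}_{D_1}\le\norm{\bfx}_{D_2}+\norm{\bfx}_\infty\,d_W\!\left(\norm\cdot_{D_1},\norm\cdot_{D_2}\right).
\]

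For the second statement, I would rearrange to get $\norm{\bfx}_{D_1}-\norm{\bfx}_{D_2}\le\norm{\bfx}_\infty d_W(\norm\cdot_{D_1},\norm\cdot_{D_2})$, then interchange the roles of $D_1$ and $D_2$ (the bound is symmetric in them, and so is $d_W$) to get the reverse inequality, hence $\abs{\norm{\bfx}_{D_1}-\norm{\bfx}_{D_2}}\le\norm{\bfx}_\infty d_W(\norm\cdot_{D_1},\norm\cdot_{D_2})$; taking the supremum over $\norm{\bfx}_\infty\le r$ finishes the proof. The only subtlety — really the one place to be slightly careful — is the passage from a fixed coupling to the infimum: one should note that the infimum in the definition of $d_W$ on $D$-norms is attained (or at least approached) by couplings realizable on one probability space, which is standard for the Wasserstein metric on the complete separable space $S_d$; otherwise every step is an elementary estimate.
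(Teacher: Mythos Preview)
Your proof is correct and follows essentially the same route as the paper's: write $Z_i^{(1)}=Z_i^{(2)}+(Z_i^{(1)}-Z_i^{(2)})$, bound the maximum, pull out $\norm{\bfx}_\infty$, and take the infimum over couplings. You are simply more explicit than the paper about the passage from $\norm\cdot_\infty$ to $\norm\cdot_1$, the infimum step, and the symmetry argument for the second display, all of which the paper absorbs into the phrase ``which implies the assertion.''
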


\begin{proof}
Let $\bfZ^{(i)}$ be a generator of $\norm\cdot_{D_i}$, $i=1,2$. We have
\begin{align*}
\norm{\bfx}_{D_1}&= E\left(\max_{1\le i\le d}\left(\abs{x_i} Z_i^{(1)}\right)\right)\\
&= E\left(\max_{1\le i\le d}\left(\abs{x_i}\left(Z_i^{(2)}+ Z_i^{(1)}-Z_i^{(2)}\right)\right)\right)\\
&\le E\left(\max_{1\le i\le d}\left(\abs{x_i} Z_i^{(2)}\right)\right) + \norm{\bfx}_\infty E\left(\max_{1\le i\le d}\abs{Z_i^{(1)}-Z_i^{(2)}}\right),
\end{align*}
which implies the assertion.
\end{proof}

\section{Doubly Stochastic Matrices}\label{sec:bistochastic_matrices}

Denote by $\mathbb M$ the set of all doubly stochastic (or bistochastic)
$d\times d$-matrices. Let $\bfZ$ be the generator of a $D$-norm
$\norm\cdot_D$ on $\R^d$ with the additional property $\norm{\bfZ}_1=d$. If
$\bfZ$ is interpreted as a column vector then
\[
\bfZ_M:= M\bfZ
\]
is for each $M\in\mathbb M$ the generator of a $D$-norm as well. By the fact
that $M$ is doubly stochastic, we also have $\norm{\bfZ_M}_1=d$.

Let, for instance, $\bfZ$ be a random permutation of the vector
$(d,0,\dots,0)^\intercal\in\R^d$ with equal probability $1/d$. The
corresponding $D$-norm is $\norm\cdot_1$, which is an upper bound for each
$D$-norm. Let $M_0$ be the $d\times d$-matrix with constant entry $1/d$. Then
we obtain
\[
\bfZ_{M_0}= M_0\bfZ=(1,\dots,1)^\intercal,
\]
which is the generator of the $D$-norm $\norm\cdot_\infty$. This $D$-norm is
a lower bound for each $D$-norm.  This example shows the influence  that the
multiplication of a generator with a doubly stochastic matrix can have. Note
that actually $M_0\bfZ=(1,\dots,1)^\intercal$ for each generator $\bfZ$
satisfying $\norm{\bfZ}_1=d$.

By identifying a generator $\bfZ$ with its corresponding $D$-norm
$\norm\cdot_{D(\bfZ)}$, say, we define the function $f:\mathbb
M\times\mathbb D\to\mathbb D$ by
\[
f\left(M,\norm\cdot_{D(\bfZ)}\right):=\norm\cdot_{D(M\bfZ)};
\]
recall that the distribution of the generator $\bfZ$ of a $D$-norm is
uniquely determined under the additional condition $\norm{\bfZ}_1=d$.

\begin{lemma}\label{lem:distance_between_norms}
If we equip $\mathbb M$ with  the metric
$\norm{M_1-M_2}_1=\sum_{i,j=1}^d\abs{m_{ij}^{(1)}- m_{ij}^{(2)}}$,
$M_1,M_2\in\mathbb M$, and the space $\mathbb D$ of all $D$-norms on $\R^d$
with the Wasserstein metric $d_W$, then the function $f$ is continuous,
precisely,
\begin{align*}
&d_W\left(f\left(M_1,\norm\cdot_{D(\bfZ^{(1)})}\right), f\left(M_2,\norm\cdot_{D(\bfZ^{(2)})}\right)\right)\\
&\le \norm{M_1-M_2}_1 + d\, d_W\left(\norm\cdot_{D(\bfZ^{(1)})}, \norm\cdot_{D(\bfZ^{(2)})}\right).
\end{align*}
\end{lemma}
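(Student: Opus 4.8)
The plan is to establish the claimed inequality by constructing an explicit coupling of the generators of the four $D$-norms involved and then estimating directly. Fix generators $\bfZ^{(1)}$, $\bfZ^{(2)}$ with $\norm{\bfZ^{(i)}}_1 = d$ that come close to achieving the infimum in $d_W\left(\norm\cdot_{D(\bfZ^{(1)})}, \norm\cdot_{D(\bfZ^{(2)})}\right)$; more precisely, for given $\eps > 0$ choose them on a common probability space so that $E\left(\norm{\bfZ^{(1)} - \bfZ^{(2)}}_1\right) \le d_W\left(\norm\cdot_{D(\bfZ^{(1)})}, \norm\cdot_{D(\bfZ^{(2)})}\right) + \eps$. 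Then $M_1\bfZ^{(1)}$ and $M_2\bfZ^{(2)}$ are generators of $f\left(M_1,\norm\cdot_{D(\bfZ^{(1)})}\right)$ and $f\left(M_2,\norm\cdot_{D(\bfZ^{(2)})}\right)$ respectively, both having $\norm\cdot_1$-norm equal to $d$ since $M_1, M_2$ are doubly stochastic. Hence by definition of $d_W$ on $\mathbb D$,
\[
d_W\left(f\left(M_1,\norm\cdot_{D(\bfZ^{(1)})}\right), f\left(M_2,\norm\cdot_{D(\bfZ^{(2)})}\right)\right) \le E\left(\norm{M_1\bfZ^{(1)} - M_2\bfZ^{(2)}}_1\right).
\]

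The core of the argument is then the deterministic estimate, for each realization,
\[
\norm{M_1\bfZ^{(1)} - M_2\bfZ^{(2)}}_1 \le \norm{(M_1 - M_2)\bfZ^{(1)}}_1 + \norm{M_2(\bfZ^{(1)} - \bfZ^{(2)})}_1.
\]
For the first summand, write it out coordinatewise as $\sum_{i=1}^d \abs{\sum_{j=1}^d (m_{ij}^{(1)} - m_{ij}^{(2)}) Z_j^{(1)}}$, bound by $\sum_{i,j} \abs{m_{ij}^{(1)} - m_{ij}^{(2)}} Z_j^{(1)}$, and observe that summing over $i$ first gives $\sum_j \left(\sum_i \abs{m_{ij}^{(1)} - m_{ij}^{(2)}}\right) Z_j^{(1)} \le \norm{M_1 - M_2}_1 \max_j Z_j^{(1)}$; a cleaner route is $\sum_{i,j} \abs{m_{ij}^{(1)} - m_{ij}^{(2)}} Z_j^{(1)} \le \norm{M_1 - M_2}_1 \cdot \norm{\bfZ^{(1)}}_\infty$. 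For the second summand, since $M_2$ is doubly stochastic, $\norm{M_2 \bfv}_1 \le \norm{\bfv}_1$ for every $\bfv$ (the column sums being one), so $\norm{M_2(\bfZ^{(1)} - \bfZ^{(2)})}_1 \le \norm{\bfZ^{(1)} - \bfZ^{(2)}}_1$.

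Taking expectations and combining: $E\left(\norm{M_1\bfZ^{(1)} - M_2\bfZ^{(2)}}_1\right) \le \norm{M_1 - M_2}_1 \cdot E\left(\norm{\bfZ^{(1)}}_\infty\right) + E\left(\norm{\bfZ^{(1)} - \bfZ^{(2)}}_1\right)$. The term $E\left(\norm{\bfZ^{(1)}}_\infty\right) = \norm{\bfone}_{D(\bfZ^{(1)})}$ is the generator constant, which by \eqref{eqn:D-norm_bounds} is at most $d$ (indeed $\norm{\bfone}_{D} \le \norm{\bfone}_1 = d$). Hence the bound becomes $\norm{M_1 - M_2}_1 \cdot d + d_W\left(\norm\cdot_{D(\bfZ^{(1)})}, \norm\cdot_{D(\bfZ^{(2)})}\right) + \eps$. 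Wait — the claimed inequality has coefficient $1$ on $\norm{M_1 - M_2}_1$, not $d$; so I must be more careful and use the sharper bound $\sum_j \left(\sum_i \abs{m_{ij}^{(1)} - m_{ij}^{(2)}}\right) Z_j^{(1)}$ together with $E(Z_j^{(1)}) = 1$ for each $j$, giving $E\left(\norm{(M_1-M_2)\bfZ^{(1)}}_1\right) \le \sum_{i,j} \abs{m_{ij}^{(1)} - m_{ij}^{(2)}} E(Z_j^{(1)}) = \norm{M_1 - M_2}_1$. This is the key point: one must take the expectation \emph{before} pulling out a maximum, exploiting that each marginal mean of a normalized generator equals $1$. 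Letting $\eps \downarrow 0$ then yields the stated inequality, and continuity of $f$ follows since the right-hand side tends to $0$ as $(M_1, \norm\cdot_{D(\bfZ^{(1)})}) \to (M_2, \norm\cdot_{D(\bfZ^{(2)})})$. I expect the main (minor) obstacle to be exactly this ordering subtlety — resisting the temptation to bound $\norm{(M_1-M_2)\bfZ^{(1)}}_1$ pathwise by something involving $\norm{\bfZ^{(1)}}_\infty$, and instead keeping the sum outside so that linearity of expectation and the unit-mean constraint produce the clean constant $1$.
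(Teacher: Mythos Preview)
Your argument is correct and follows the same decomposition as the paper: write $M_1\bfZ^{(1)}-M_2\bfZ^{(2)}=(M_1-M_2)\bfZ^{(1)}+M_2(\bfZ^{(1)}-\bfZ^{(2)})$ and bound the two pieces separately. The paper phrases the split as the triangle inequality for $d_W$ through the intermediate point $f(M_2,\norm\cdot_{D(\bfZ^{(1)})})$, while you work pathwise and then take expectations; this is the same thing. Your handling of the first piece, using $E(Z_j^{(1)})=1$ to obtain $E\bigl(\norm{(M_1-M_2)\bfZ^{(1)}}_1\bigr)\le\norm{M_1-M_2}_1$, matches the paper exactly. On the second piece you actually do better: the column-stochasticity of $M_2$ gives $\norm{M_2\bfv}_1\le\norm{\bfv}_1$, so you end up with coefficient $1$ rather than $d$ on the Wasserstein term. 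The paper records only the cruder bound with the extra factor $d$; your version implies the stated lemma a fortiori and is in fact a sharper Lipschitz estimate for $f$.
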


\begin{proof}
The triangular inequality implies
\begin{align*}
&d_W\left(f\left(M_1,\norm\cdot_{D(\bfZ^{(1)})}\right), f\left(M_2,\norm\cdot_{D(\bfZ^{(2)})}\right)\right)\\
&\le d_W\left(f\left(M_1,\norm\cdot_{D(\bfZ^{(1)})}\right), f\left(M_2,\norm\cdot_{D(\bfZ^{(1)})}\right)\right)\\
 &\hspace*{1cm}+ d_W\left(f\left(M_2,\norm\cdot_{D(\bfZ^{(1)})}\right), f\left(M_2,\norm\cdot_{D(\bfZ^{(2)})}\right)\right)\\
&\le E\left(\norm{(M_1-M_2)\bfZ^{(1)}}_1\right) + E\left(\norm{M_2\left(\bfZ^{(1)}-\bfZ^{(2)}\right)}_1\right)\\
&\le \norm{M_1-M_2}_1 + dE\left( \norm{\bfZ^{(1)}-\bfZ^{(2)}}_1\right),
\end{align*}
which yields the assertion.
\end{proof}

Let $\bfZ$ be a random permutation of the  vector $(d,0,\dots,0)\in\R^d$ with
equal probability $1/d$ and set $M_0=(1/d)\in\R^{d\times d}$. Then we obtain
from Lemma \ref{lem:distance_between_norms} the bound
\begin{equation*}
d_W\left(\norm\cdot_\infty,\norm\cdot_1\right) = d_W\left(f\left(M_0,\norm\cdot_1\right), f\left(I_d,\norm\cdot_1\right)\right)
\le \norm{M_0-I_d}_1=2(d-1),
\end{equation*}
where $I_d$ is the $d\times d$ unit matrix. Note that this bound is sharp by
the fact that the distribution of a generator $\bfZ$ with $\norm{\bfZ}_1=d$
is uniquely determined and, thus, we compute
$d_W\left(\norm\cdot_\infty,\norm\cdot_1\right)=2(d-1)$.

The idea suggests itself to iterate the multiplication of a generator with a
matrix and to consider
\[
\bfZ^{(n)}:= M^n\bfZ,\qquad n\in\N,
\]
where $M^n$ denotes the ordinary $n$-times matrix product. The question,
whether the sequence $\bfZ^{(n)}$, $n\in\N$, converges, can be answered by
fundamental results from the theory of Markov chains. In particular we obtain
the following result, which shows that the sequence of $D$-norms
$\bigl(\norm\cdot_{D(\bfZ^{(n)})}\bigr)_{n\in\N}$ converges to $\norm\cdot_1$
under mild conditions on the matrix $M$.

\begin{prop}
Suppose that each entry $M^n(i,j)$ of the matrix $M^n$ is positive if $n$ is
large. Then we obtain for an arbitrary generator $\bfZ$
\[
\bfZ^{(n)}\to_{n\to\infty} (1,\dots,1)^\intercal \in\R^d.
\]
\end{prop}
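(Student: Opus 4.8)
The plan is to reduce the claimed convergence of the random vectors $\bfZ^{(n)}=M^n\bfZ$ to a statement about the deterministic matrix powers $M^n$, and then invoke the standard Perron--Frobenius / convergence theory for finite Markov chains. First I would observe that, since $M$ is doubly stochastic and by hypothesis $M^n$ has strictly positive entries for all large $n$, the matrix $M$ is (when viewed as a transition matrix of a finite-state Markov chain on $\{1,\dots,d\}$) irreducible and aperiodic. The unique stationary distribution of a doubly stochastic transition matrix is the uniform distribution $(1/d,\dots,1/d)$, so the classical convergence theorem for ergodic finite Markov chains gives $M^n(i,j)\to 1/d$ for every pair $(i,j)$ as $n\to\infty$; equivalently $M^n\to M_0$ entrywise, where $M_0$ is the $d\times d$ matrix with all entries $1/d$.

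Next I would transfer this to the generator. For any fixed realization, $\bfZ^{(n)}=M^n\bfZ$, and since $M^n\to M_0$ entrywise we have, pathwise,
\[
\bfZ^{(n)}=M^n\bfZ\To M_0\bfZ=\left(\tfrac1d\sum_{j=1}^d Z_j,\dots,\tfrac1d\sum_{j=1}^d Z_j\right)
\]
as $n\to\infty$. Here I use the paper's own observation that $M_0\bfZ=(1,\dots,1)^\intercal$ for every generator $\bfZ$ normalized by $\norm{\bfZ}_1=d$, because then $\sum_{j=1}^d Z_j=d$ almost surely. Thus $\bfZ^{(n)}\to(1,\dots,1)^\intercal$ surely (hence in distribution, and also in $L_1$ since the convergence is of a fixed finite linear combination with bounded coefficients and $E(Z_j)=1$). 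By the equivalence established earlier in the excerpt (convergence in the Wasserstein metric $d_W$ of $D$-norms is equivalent to weak convergence of the normalized generators), this also yields $d_W\bigl(\norm\cdot_{D(\bfZ^{(n)})},\norm\cdot_1\bigr)\to 0$, recovering the remark that the sequence of $D$-norms converges to $\norm\cdot_1$.

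I should be slightly careful about the normalization: the proposition is stated for an arbitrary generator $\bfZ$, not necessarily one with $\norm{\bfZ}_1=d$. If $\bfZ$ is arbitrary, then $M_0\bfZ$ equals the constant vector $\bar Z(1,\dots,1)^\intercal$ with $\bar Z=\tfrac1d\sum_{j=1}^d Z_j$, which has expectation $1$ but need not be constant. The cleanest route is to invoke Lemma~2.1 to replace $\bfZ$ by the essentially unique generator of the same $D$-norm satisfying $\norm{\bfZ}_1=d$ (which leaves $\norm\cdot_{D(M^n\bfZ)}$ unchanged), and then argue as above; alternatively one notes that the statement $\bfZ^{(n)}\to(1,\dots,1)^\intercal$ as written is literally true only under that normalization, so this reduction is exactly what is intended.

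The main obstacle is not any hard estimate but rather citing the correct Markov-chain convergence result at the right level of generality: one needs the hypothesis ``$M^n(i,j)>0$ for all large $n$'' to be precisely the condition guaranteeing a primitive (irreducible aperiodic) stochastic matrix, so that the Perron--Frobenius theorem applies and forces convergence of $M^n$ to the rank-one projection onto the stationary vector. Since the stationary vector of a doubly stochastic matrix is uniform, everything else is immediate. A secondary, purely bookkeeping point is making explicit that entrywise convergence of $M^n$ together with $\norm{\bfZ}_1=d$ a.s.\ gives almost sure (indeed deterministic, path-by-path) convergence of $\bfZ^{(n)}$, so that no probabilistic tightness argument is needed.
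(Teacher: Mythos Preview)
Your argument is correct and matches the paper's: interpret $M$ as a Markov transition matrix, use the positivity hypothesis to get irreducibility and aperiodicity, identify the stationary distribution as uniform via double stochasticity, and conclude $M^n(i,j)\to 1/d$; you are simply more explicit than the paper about the final step $M^n\bfZ\to M_0\bfZ$ and about the normalization $\norm{\bfZ}_1=d$. One small slip in your concluding aside (which you inherited from a typo in the paper's own lead-in to the proposition): the constant generator $(1,\dots,1)^\intercal$ generates $\norm\cdot_\infty$, not $\norm\cdot_1$, so the limiting $D$-norm is $\norm\cdot_\infty$.
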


The condition $M^n(i,j)>0$ for each $i,j\in\set{1,\dots,d}$ cannot be dropped
in the preceding result; just set $M=I_d$, the unit matrix, or let $M$ be any
bistochastic matrix which has only the entries zero and one.

\begin{proof}
The matrix $M=(m(i,j))_{1\le i,j\le d}$ can be viewed as a matrix of
transition probabilities $p(j\mid i)$ from the state $i$ to the state $j$,
where $i,j\in\set{1,\dots,d}$, and, thus, the transition matrix $M$ defines a
time-homogenous  Markov chain on the state space $\set{1,\dots,d}$. The
condition that each entry of $M^n$ is positive for large $n$ is equivalent
with the condition that $M$ is aperiodic and irreducible. It is well-known
from the theory of Markov chains that in this case
\[
M^n(i,j)\to_{n\to\infty}\mu(j), \qquad i,j\in\set{1,\dots,d},
\]
where the (row) vector $\mu$ is the uniquely determined stationary
distribution on $\set{1,\dots,d}$, i.e., $\mu M=\mu$.  As $M$ is
bistochastic, we obtain $\mu(j)=1/d$, $j=1,\dots,d$, which completes the
proof.
\end{proof}

\section{The $D$-Norm Generated From a Symmetric Dirichlet Distribution}\label{sec:dirichlet_distribution}

Let in what follows $V_1,\dots,V_d$, $d\ge 2$, be independent and identically
gamma distributed rv with density
$\gamma_\alpha(x):=x^{\alpha-1}\exp(-x)/\Gamma(\alpha)$, $x>0$, $\alpha>0$.
Then the rv $\tilde\bfZ\in\R^d$ with components
\[
\tilde Z_i:=\frac{V_i}{V_1+\dots+V_d},\qquad i=1,\dots,d,
\]
follows a symmetric \emph{Dirichlet distribution} $\mathrm{Dir}(\alpha)$ on
the closed simplex $\tilde S_d=\set{\bfu\ge
\bfzero\in\R^d:\,\sum_{i=1}^du_i=1}$, see \citet[Theorem 2.1]{ngtita2011}. By
equation (2.6) in this reference we have $E(\tilde Z_i)=1/d$ and, thus,
\begin{equation}\label{eqn:definition_generator_Dirichlet_distribution}
\bfZ:=d \tilde \bfZ
\end{equation}
is a generator of a $D$-norm $\norm\cdot_{D(\alpha)}$ on $\R^d$, which we
call the \emph{Dirichlet D-norm} with parameter $\alpha$. We have in
particular $\norm{\bfZ}_1=d$.

Note that $\gamma_1(x)=\exp(-x)$, $x>0$, is the density of the standard
exponential distribution, in which case
\[
\tilde\bfZ=_D\left(U_{i:d-1}-U_{i-1:d-1}\right)_{i=1}^d,
\]
where $U_{1:d-1}\le U_{2:d-1}\le\cdots\le U_{d-1:d-1}$ are the order
statistics pertaining to $d-1$ independent and on $(0,1)$ uniformly
distributed rv, $U_{0:d-1}:=0$, $U_{d:d-1}:=1$, see \citet[Theorem
1.6.7]{reiss89}. The distribution of the rv $\tilde\bfZ$ with $\alpha=1$ is,
therefore, that of the vector of uniform spacings.

It is well-known that for a general $\alpha>0$ the rv $\left(V_i/\sum_{j=1}^d
V_j\right)_{i=1}^d$ and the sum $\sum_{j=1}^d V_j$ are independent, see,
e.g., the proof of Theorem 2.1 in \citet{ngtita2011}. As
$E(V_1+\dots+V_d)=d\alpha$, we obtain for $\bfx = (x_1,\dots,x_d)\in\R^d$
\begin{align}
  \norm\bfx_{D(\alpha)}
  &=E\left(\max_{1\le i\le d}\left(\abs{x_i} Z_i\right)\right) \nonumber\\
  &= d E\left( \frac{\max_{1\le i\le d}\left(\abs{x_i} V_i\right)} {V_1+\dots+V_d}\right)\nonumber\\
  &=\frac 1\alpha E(V_1+\dots+V_d) E\left( \frac{\max_{1\le i\le d}\left(\abs{x_i} V_i\right)} {V_1+\dots+V_d}\right)\nonumber\\
  &= \frac 1\alpha E\left( \max_{1\le i\le d}\left(\abs{x_i} V_i\right) \right).
  \label{eqn:representation_of_Dirichlet_D-norm}
\end{align}
Note that the independence of $V_i/(V_1+\dots+V_d)$ and $V_1+\dots+V_d$,
$1\le i\le d$, is by Lukacs' theorem a characteristic property of the gamma
distribution; see, e.g., \citet[Section 2.6.1]{ngtita2011} for details.

The Dirichlet model for bivariate extreme value df was investigated by
\citet[Section 4.3]{colta91}, \citet[Example 3.6]{segers12a} studies the
Dirichlet model in arbitrary dimension. \citet[Appendix A]{bolda07} show that
each $D$-norm can be approximated by a $D$-norm generated by a mixture of
Dirichlet distributions.

The symmetric Dirichlet distribution is also an appealing parametric model
for a  rv that follows a \emph{generalized Pareto distribution} (GPD). Let
$U$ be uniformly distributed on $(0,1)$ and independent of the generator
$\bfZ$ as defined in \eqref{eqn:definition_generator_Dirichlet_distribution}.
Then
\[
\bfY:=-U\frac \bfone{\bfZ}
\]
follows a GPD with
\[
P(\bfY\le \bfx)=1-\frac 1\alpha E\left(\max_{1\le i\le d}\left(\abs{x_i}V_i\right)\right)
\]
for all $\bfx\le \bfzero\in\R^d$ with $\norm{\bfx}_\infty\le 1/d$. Equally,
\[
P(\bfY>\bfx)=\frac 1\alpha E\left(\min_{1\le i\le d}\left(\abs{x_i}V_i\right)\right).
\]

In the particular case $\alpha=1$ we obtain from the min-stability of the
exponential distribution on $[0,\infty)$
\[
E\left(\min_{1\le i\le d}V_i\right)=\frac 1d
\]
and, thus,
\[
P(\bfY>-c\bfone)=\frac cd,\qquad 0\le c\le 1/d.
\]
For an account of multivariate GPD we refer to \citet[Chapter 5]{fahure10}.

We discuss in what follows the generator constant function
\begin{equation*}
m(\alpha):=\norm{\bfone}_{D(\alpha)},\qquad \alpha > 0,
\end{equation*}
pertaining to the Dirichlet $D$-norms. We start with the bivariate case. From
the arguments in \citet[Section 4.3]{colta91} we obtain the representation
\[
\norm{(x,y)}_{D(\alpha)}=\abs{x} B\left(\alpha,\alpha+1, \frac{\abs{x}}{\abs{x}+\abs{y}}\right) + \abs{y} B\left(\alpha,\alpha+1, \frac{\abs{y}}{\abs{x}+\abs{y}}\right),
\]
where
\[
B(a,b,x)=\frac{\Gamma(a+b)}{\Gamma(a)\Gamma(b)} \int_0^x u^{a-1}(1-u)^{b-1}\,du,\qquad x\in[0,1],
\]
denotes the normalized incomplete beta function. The next result follows from
tedious but elementary computations.

\begin{prop}[The bivariate case]\label{dirichlet_generator_constant}
We have for all $\alpha>0$
\begin{equation*}
m(\alpha)=1+\frac{\Gamma\left(\alpha+\frac12\right)}{\sqrt{\pi}~\Gamma(\alpha+1)}=1+\frac{1}{\alpha
B\left(\alpha,\frac12\right)}
\end{equation*}
where $B(a,b) = \int_0^1 u^{a-1} (1-u)^{b-1} \, du$ denotes the beta
function.
\end{prop}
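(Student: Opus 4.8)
The plan is to compute $m(\alpha) = \norm{(1,1)}_{D(\alpha)}$ directly from the incomplete-beta representation quoted just above the statement, specializing to $x=y=1$. Setting $\abs{x}=\abs{y}=1$ in
\[
\norm{(x,y)}_{D(\alpha)}=\abs{x} B\!\left(\alpha,\alpha+1, \tfrac{\abs{x}}{\abs{x}+\abs{y}}\right) + \abs{y} B\!\left(\alpha,\alpha+1, \tfrac{\abs{y}}{\abs{x}+\abs{y}}\right)
\]
gives $m(\alpha) = 2 B\!\left(\alpha,\alpha+1,\tfrac12\right)$, so the whole problem reduces to evaluating the normalized incomplete beta function $B(\alpha,\alpha+1,\tfrac12)$ in closed form.

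First I would write out
\[
B\!\left(\alpha,\alpha+1,\tfrac12\right)=\frac{\Gamma(2\alpha+1)}{\Gamma(\alpha)\Gamma(\alpha+1)}\int_0^{1/2} u^{\alpha-1}(1-u)^{\alpha}\,du,
\]
and the key step is to evaluate $\int_0^{1/2} u^{\alpha-1}(1-u)^{\alpha}\,du$. I would integrate by parts, differentiating $(1-u)^\alpha$ and integrating $u^{\alpha-1}$, which produces the boundary term $\tfrac1\alpha\cdot\tfrac{1}{2^\alpha}\cdot\tfrac{1}{2^\alpha}=\tfrac{1}{\alpha 4^\alpha}$ at $u=\tfrac12$ (the term at $u=0$ vanishes since $\alpha>0$) plus $\tfrac{1}{\alpha}\int_0^{1/2} u^\alpha(1-u)^{\alpha-1}\,du$. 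Now I would exploit the symmetry of the integrand of the \emph{complete} beta integral $\int_0^1 u^\alpha(1-u)^{\alpha-1}\,du$ under $u\mapsto 1-u$: that integral equals $\int_0^1 u^{\alpha-1}(1-u)^{\alpha}\,du = B(\alpha,\alpha+1)$, and by symmetry each of the two half-range pieces $\int_0^{1/2}$ and $\int_{1/2}^{1}$ of these two integrals pair up, yielding $\int_0^{1/2} u^{\alpha-1}(1-u)^\alpha\,du + \int_0^{1/2} u^\alpha(1-u)^{\alpha-1}\,du = B(\alpha,\alpha+1)$. Combining with the integration-by-parts identity gives a single linear equation for $I:=\int_0^{1/2} u^{\alpha-1}(1-u)^{\alpha}\,du$, namely $I = \tfrac{1}{\alpha 4^\alpha} + \tfrac{1}{\alpha}\bigl(B(\alpha,\alpha+1)-I\bigr)$, hence $I = \tfrac{1}{\alpha+1}\bigl(\tfrac{1}{\alpha 4^\alpha} + \tfrac1\alpha B(\alpha,\alpha+1)\bigr)$.

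Finally I would substitute back, using $B(\alpha,\alpha+1)=\Gamma(\alpha)\Gamma(\alpha+1)/\Gamma(2\alpha+1)$, to get $B(\alpha,\alpha+1,\tfrac12)=\tfrac12 + \tfrac{\Gamma(2\alpha+1)}{4^\alpha\,\alpha(\alpha+1)\Gamma(\alpha)\Gamma(\alpha+1)}$, so that $m(\alpha)=1 + \tfrac{\Gamma(2\alpha+1)}{4^\alpha\,\alpha(\alpha+1)\Gamma(\alpha)\Gamma(\alpha+1)}$. To reach the stated form I would apply the Legendre duplication formula $\Gamma(2\alpha+1)=\tfrac{4^\alpha}{2\sqrt\pi}\,\Gamma(\alpha+\tfrac12)\Gamma(\alpha+1)$ (equivalently $\Gamma(2\alpha)=\tfrac{2^{2\alpha-1}}{\sqrt\pi}\Gamma(\alpha)\Gamma(\alpha+\tfrac12)$), which cancels the $4^\alpha$ and collapses the expression to $1+\tfrac{\Gamma(\alpha+\frac12)}{2\sqrt\pi\,\alpha(\alpha+1)\Gamma(\alpha)}=1+\tfrac{\Gamma(\alpha+\frac12)}{\sqrt\pi\,\Gamma(\alpha+1)}$ after using $\Gamma(\alpha+1)=\alpha\Gamma(\alpha)$ and $\Gamma(\alpha+2)=(\alpha+1)\Gamma(\alpha+1)$; the second displayed form then follows from $B(\alpha,\tfrac12)=\Gamma(\alpha)\Gamma(\tfrac12)/\Gamma(\alpha+\tfrac12)=\sqrt\pi\,\Gamma(\alpha)/\Gamma(\alpha+\tfrac12)$, giving $\alpha B(\alpha,\tfrac12)=\sqrt\pi\,\Gamma(\alpha+1)/\Gamma(\alpha+\tfrac12)$.

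The routine-but-delicate part is the bookkeeping with the beta identities and the duplication formula; the one genuinely nonobvious move is realizing that integration by parts plus the $u\mapsto1-u$ symmetry of the half-range integrals closes into a solvable linear equation for $I$, rather than producing an ever-growing chain of incomplete beta values. I expect that to be the main obstacle, and everything after it is the "tedious but elementary computation" the paper alludes to.
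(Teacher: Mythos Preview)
Your overall strategy is exactly the ``tedious but elementary computation'' the paper has in mind: start from the Coles--Tawn incomplete-beta representation, specialize to $x=y=1$ so that $m(\alpha)=2\,B(\alpha,\alpha+1,\tfrac12)$, evaluate the half-range integral via integration by parts combined with the $u\mapsto 1-u$ symmetry, and simplify with the Legendre duplication formula. The paper gives no further details, and your outline is the natural way to fill them in.

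There is, however, a bookkeeping slip that propagates. In the integration by parts, differentiating $(1-u)^\alpha$ brings down a factor $\alpha$ that \emph{cancels} the $1/\alpha$ coming from integrating $u^{\alpha-1}$, so the correct identity is
\[
I:=\int_0^{1/2} u^{\alpha-1}(1-u)^\alpha\,du=\frac{1}{\alpha\,4^\alpha}+\int_0^{1/2} u^{\alpha}(1-u)^{\alpha-1}\,du,
\]
with no $1/\alpha$ in front of the remaining integral. Together with your (correct) symmetry observation $I+J=B(\alpha,\alpha+1)$ this gives simply $2I=\frac{1}{\alpha\,4^\alpha}+B(\alpha,\alpha+1)$, hence
\[
m(\alpha)=2\,B\!\left(\alpha,\alpha+1,\tfrac12\right)=1+\frac{\Gamma(2\alpha+1)}{\alpha\,4^\alpha\,\Gamma(\alpha)\,\Gamma(\alpha+1)}.
\]
Your stated intermediate expression has an extra factor $(\alpha+1)$ in the denominator and does not reduce to the claimed answer (check $\alpha=1$: your formula gives $5/4$, whereas $m(1)=3/2$). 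Relatedly, your version of the duplication formula is off by a factor of $2$; the correct form is $\Gamma(2\alpha+1)=\frac{4^\alpha}{\sqrt\pi}\,\Gamma(\alpha+\tfrac12)\,\Gamma(\alpha+1)$, and substituting this into the corrected display above yields $1+\Gamma(\alpha+\tfrac12)/\bigl(\sqrt\pi\,\Gamma(\alpha+1)\bigr)$ directly, with no need for the $\Gamma(\alpha+2)$ step. With these two arithmetical corrections your plan goes through cleanly.
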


The fact that the function $m(\alpha)$ is decreasing and that it attains each
value in the interval $(1,d)$ is shown for arbitrary dimension in what
follows. Therefore we denote by $F_\alpha$ the df of the gamma distribution
with parameter $\alpha
> 0$, i.e.,
\begin{equation*}
F_{\alpha}(x)=\frac{\gamma(\alpha,x)}{\Gamma(\alpha)}=1-\frac{\Gamma(\alpha,x)}{\Gamma(\alpha)},\quad x>0,
\end{equation*}
where $\gamma(\alpha,x)=\int_0^x t^{\alpha-1}\exp(-t)\,dt$ and
$\Gamma(\alpha,x)=\int_x^{\infty}
t^{\alpha-1}\exp(-t)\,dt=\Gamma(\alpha)-\gamma(\alpha,x)$ are the lower and
the upper incomplete gamma function.

\begin{lemma}[Arbitrary dimension; {\citealp[Section 4.3]{colta91}}] \label{lem:the_limit_of_the_generator_constant}
Let $m(\alpha)=\norm{\bm 1}_{D(\alpha)}$ be the generator constant of the
$d$-dimensional Dirichlet generator. Then we have
$\lim_{\alpha\to0}m(\alpha)=d$ and $\lim_{\alpha\to\infty}m(\alpha)=1$.
\end{lemma}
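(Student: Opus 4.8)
The plan is to use the representation \eqref{eqn:representation_of_Dirichlet_D-norm}, namely
\[
m(\alpha)=\norm{\bfone}_{D(\alpha)}=\frac 1\alpha E\left(\max_{1\le i\le d}V_i\right),
\]
where $V_1,\dots,V_d$ are i.i.d.\ $\gamma_\alpha$-distributed. First I would express $E(\max_{1\le i\le d}V_i)$ via the survival function: since the $V_i$ are i.i.d.\ with df $F_\alpha$,
\[
E\left(\max_{1\le i\le d}V_i\right)=\int_0^\infty\left(1-F_\alpha(x)^d\right)\,dx,
\]
so that
\[
m(\alpha)=\frac 1\alpha\int_0^\infty\left(1-F_\alpha(x)^d\right)\,dx.
\]
The strategy for both limits is then to substitute $x=\alpha y$ (or a similarly normalised variable) and to analyse the behaviour of $F_\alpha(\alpha y)$ as $\alpha\to 0$ and $\alpha\to\infty$, using dominated convergence to pass the limit inside the integral.

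For $\alpha\to\infty$, the law of large numbers (or the fact that a $\gamma_\alpha$ rv is a sum of $\alpha$ i.i.d.\ exponentials when $\alpha$ is an integer, plus concentration in general) tells us that $V_i/\alpha\to 1$ in probability, so $\max_i V_i/\alpha\to 1$ as well; hence $E(\max_i V_i/\alpha)\to 1$, giving $m(\alpha)\to 1$. To make this rigorous I would use the substitution above together with the standard bound $1-F_\alpha(\alpha y)\le$ (something summable/integrable uniformly in $\alpha$), e.g.\ via a Chernoff-type tail estimate for the gamma distribution, to justify dominated convergence; the pointwise limit of $1-F_\alpha(\alpha y)^d$ is $\mathbf 1_{(0,1)}(y)\cdot$const in the appropriate sense, integrating to $1$. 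Alternatively, one can simply invoke $\norm\cdot_\infty\le\norm\cdot_{D(\alpha)}$ for the lower bound $m(\alpha)\ge 1$ and produce a matching upper bound $m(\alpha)\le 1+o(1)$ from a second-moment (Chebyshev) argument on $\max_i V_i$.

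For $\alpha\to 0$, the key observation is that a $\gamma_\alpha$ rv with small $\alpha$ is, after a suitable rescaling, extremely spread out on a logarithmic scale: $F_\alpha(x)\approx x^\alpha/\Gamma(\alpha+1)\approx \alpha\log(1/\cdot)$-type behaviour near zero, and $\Gamma(\alpha)\sim 1/\alpha$, so $F_\alpha(x)=\gamma(\alpha,x)/\Gamma(\alpha)$ behaves like $\alpha\,\gamma(\alpha,x)\cdot(1+o(1))$ for fixed $x$, which tends to $0$; more precisely $1-F_\alpha(x)\to 1$ for each fixed $x>0$ as $\alpha\to 0$. This forces the components $V_i$ to become asymptotically ``independent in their extremes'': the probability that two of them are simultaneously of comparable size vanishes, so $E(\max_i V_i)\approx d\,E(V_1\mathbf 1\{V_1 \text{ is the max}\})$, and carrying the $1/\alpha$ factor through should yield $m(\alpha)\to d$. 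Concretely, I would write $\max_i V_i=\sum_{i=1}^d V_i-\sum_{i<j}\min(V_i,V_j)+\cdots$ by inclusion–exclusion, note $\tfrac1\alpha E(\sum_i V_i)=\tfrac1\alpha\cdot d\alpha=d$, and show that all the cross terms $\tfrac1\alpha E(\min(V_i,V_j))$ and higher vanish as $\alpha\to 0$. The identity $E(\min(V_i,V_j))=\int_0^\infty(1-F_\alpha(x))^2\,dx$, combined with $\int_0^\infty(1-F_\alpha(x))\,dx=E(V_1)=\alpha$ and the pointwise bound $(1-F_\alpha(x))^2\le(1-F_\alpha(x))$, only gives $\tfrac1\alpha E(\min(V_i,V_j))\le 1$, which is not enough; so the main obstacle is sharpening this to $o(1)$.

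The crux, therefore, is the quantitative estimate $\int_0^\infty(1-F_\alpha(x))^2\,dx=o(\alpha)$ as $\alpha\to 0$. I expect this to follow from splitting the integral at a threshold like $x=1$: on $(0,1)$ one uses $1-F_\alpha(x)\le 1$ together with $\int_0^1(1-F_\alpha(x))\,dx\to 0$ (since $F_\alpha(x)\to 1$ pointwise and boundedly), and on $(1,\infty)$ one uses the decay $1-F_\alpha(x)=\Gamma(\alpha,x)/\Gamma(\alpha)$ with $\Gamma(\alpha)\sim 1/\alpha$, so $1-F_\alpha(x)\le\alpha\,\Gamma(\alpha,x)\le\alpha\,\Gamma(1,x)=\alpha e^{-x}$ for $\alpha\le 1$, whence $\int_1^\infty(1-F_\alpha(x))^2\,dx\le\alpha^2\int_1^\infty e^{-2x}\,dx=O(\alpha^2)$. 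Dividing by $\alpha$ kills both pieces. This handles the pairwise terms, and the higher-order inclusion–exclusion terms are bounded by the pairwise ones, so $m(\alpha)=d-o(1)\to d$. Together with the $\alpha\to\infty$ argument this completes the proof.
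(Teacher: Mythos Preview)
The paper does not actually supply a proof of this lemma; it is simply attributed to \citet[Section 4.3]{colta91}. So there is nothing to compare against, and your proposal should be read as a self-contained argument. Your plan for $\alpha\to\infty$ is fine: the bound $m(\alpha)\ge 1$ from \eqref{eqn:D-norm_bounds} together with $E\bigl((V_i/\alpha-1)^+\bigr)\le \sqrt{\operatorname{Var}(V_i/\alpha)}=\alpha^{-1/2}$ already gives $m(\alpha)\le 1+d\alpha^{-1/2}\to 1$, so the Chebyshev route you mention works without further ado.

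The $\alpha\to 0$ argument has a genuine gap in the $(0,1)$ piece. You bound $\int_0^1 (1-F_\alpha(x))^2\,dx\le \int_0^1 (1-F_\alpha(x))\,dx$ and note that the right-hand side tends to $0$ by bounded convergence. But you then assert that ``dividing by $\alpha$ kills both pieces'', and this is not justified: one has $\int_0^1 (1-F_\alpha(x))\,dx=E(\min(V_1,1))=\alpha F_{\alpha+1}(1)+ (1-F_\alpha(1))$, which converges to $0$ only at rate $\Theta(\alpha)$ (indeed $E(\min(V_1,1))/\alpha\to 1-e^{-1}+E_1(1)>0$). So the crude bound $(1-F_\alpha)^2\le(1-F_\alpha)$ on $(0,1)$ gives only $O(\alpha)$, not $o(\alpha)$, and the division by $\alpha$ does \emph{not} kill this piece. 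The fix is a further split at a small $\delta\in(0,1)$: on $(\delta,1)$ use $(1-F_\alpha(x))^2\le (1-F_\alpha(\delta))(1-F_\alpha(x))$ and $\int_\delta^1(1-F_\alpha)\,dx\le\alpha$, so $\tfrac1\alpha\int_\delta^1(1-F_\alpha)^2\,dx\le 1-F_\alpha(\delta)\to 0$; on $(0,\delta)$ use your bound to get $\limsup_{\alpha\to 0}\tfrac1\alpha\int_0^\delta(1-F_\alpha)^2\,dx\le \lim_{\alpha\to 0}E(\min(V_1,\delta))/\alpha=(1-e^{-\delta})+\delta E_1(\delta)$, which is small for small $\delta$. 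Letting $\delta\downarrow 0$ then yields the desired $o(\alpha)$. With this repair the inclusion--exclusion argument goes through as you describe.
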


The following auxiliary result will be the crucial tool in the proof of the
monotonicity of the Dirichlet-$D$-norm $\norm\cdot_{D(\alpha)}$ with respect
to the parameter $\alpha>0$, see below. It might be of interest of its own.

\begin{lemma}\label{lem:auxiliary_result_monotonicity}
Let $V_{ij}$, $1\le i\le d$, $1\le j\le n$, $d\in\N$, $n\ge 2$, be an array
of iid integrable rv. Then we have for arbitrary numbers $x_1,\dots,x_d\in\R$
\[
E\left(\max_{1\le i\le d}\left(x_i\frac{\sum_{j=1}^nV_{ij}}n\right)\right)\le  E\left(\max_{1\le i\le d}\left(x_i\frac{\sum_{j=1}^{n-1}V_{ij}}{n-1}\right)\right).
\]
\end{lemma}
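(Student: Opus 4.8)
The plan is to exploit the exchangeability of the array $(V_{ij})$ together with a symmetrization/averaging trick that turns the average over $n$ columns into a convex combination of averages over $n-1$ columns. Write $S_i^{(n)}:=\sum_{j=1}^n V_{ij}$ and observe the algebraic identity
\[
\frac{S_i^{(n)}}{n}=\frac 1n\sum_{k=1}^n\frac{S_i^{(n)}-V_{ik}}{n-1},
\]
i.e. the full average is the arithmetic mean of the $n$ leave-one-out averages $A_i^{(k)}:=(S_i^{(n)}-V_{ik})/(n-1)$, $k=1,\dots,n$. Since $\bfx=(x_1,\dots,x_d)$ is fixed, the map $(a_1,\dots,a_d)\mapsto\max_{1\le i\le d}(x_i a_i)$ is convex in $a$; applying Jensen to the convex combination above gives, pointwise,
\[
\max_{1\le i\le d}\Bigl(x_i\frac{S_i^{(n)}}{n}\Bigr)\le \frac 1n\sum_{k=1}^n\max_{1\le i\le d}\bigl(x_i A_i^{(k)}\bigr).
\]
Taking expectations yields $E(\max_i(x_i S_i^{(n)}/n))\le \frac 1n\sum_{k=1}^n E(\max_i(x_i A_i^{(k)}))$.

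The key step is then to recognize that each term on the right has the same value. For fixed $k$, the array $(V_{ij})_{1\le i\le d,\,j\ne k}$ is an array of iid rv indexed by $d$ rows and $n-1$ columns, with exactly the same joint distribution as $(V_{ij})_{1\le i\le d,\,1\le j\le n-1}$; hence $(A_i^{(k)})_{i=1}^d =_D \bigl(\sum_{j=1}^{n-1}V_{ij}/(n-1)\bigr)_{i=1}^d$ and
\[
E\Bigl(\max_{1\le i\le d}\bigl(x_i A_i^{(k)}\bigr)\Bigr)=E\Bigl(\max_{1\le i\le d}\Bigl(x_i\frac{\sum_{j=1}^{n-1}V_{ij}}{n-1}\Bigr)\Bigr)
\]
for every $k$. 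Substituting this common value into the averaged bound, the factor $\frac 1n\sum_{k=1}^n$ collapses to $1$ and we obtain exactly the claimed inequality. Integrability of the $V_{ij}$ guarantees all expectations are finite, so Jensen and the interchange of expectation with the finite sum are legitimate; I would remark that $E(\max_i(x_i\cdot))$ is dominated by $\sum_i\abs{x_i}\cdot\max_i A_i^{(k)}$-type bounds, or simply note $\abs{\max_i(x_i a_i)}\le(\max_i\abs{x_i})\sum_i\abs{a_i}$, to make the domination explicit.

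I do not expect a serious obstacle here; the proof is essentially a one-line convexity argument once the leave-one-out decomposition is written down, and the only things to be careful about are (i) stating the convexity of $a\mapsto\max_i(x_i a_i)$ correctly (it is a max of linear functions of $a$, hence convex, regardless of the signs of the $x_i$), and (ii) invoking exchangeability to identify the distributions of the leave-one-out arrays — this uses only that the $V_{ij}$ are iid (not any moment beyond integrability, and not the gamma structure). One should also double-check that $n\ge 2$ is used precisely so that $n-1\ge 1$ and the averages $A_i^{(k)}$ are well defined.
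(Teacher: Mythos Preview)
Your proof is correct and is essentially the same as the paper's: both rest on the leave-one-out decomposition over the $n$ columns together with exchangeability, the only difference being that the paper establishes the pointwise inequality by an explicit argmax argument (pick $i^*$ maximizing $x_i\sum_{j=1}^n V_{ij}$, bound $x_{i^*}\sum_{j\in T}V_{i^*j}\le\max_i x_i\sum_{j\in T}V_{ij}$ for each $(n-1)$-subset $T$, and sum) whereas you package the same step as convexity of $a\mapsto\max_i(x_ia_i)$ plus Jensen. After dividing the paper's summed inequality by $n(n-1)$ one obtains exactly your pointwise bound, so the arguments coincide.
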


\begin{proof}
The case $n=2$ is obvious: We have
\[
\max_{1\le i\le d}\left(x_i (V_{i1}+V_{i2})\right)\le \max_{1\le i\le d}\left(x_i V_{i1}\right) + \max_{1\le i\le d}\left(x_i V_{i2}\right)
\]
and, thus, by the identical distribution of $V_{i1}, V_{i2}$, $1\le i\le d$,
\[
E\left(\max_{1\le i\le d}\left(x_i (V_{i1}+V_{i2})\right)\right) \le 2 E\left(\max_{1\le i\le d}\left(x_i V_{i1}\right)\right).
\]

The case $n=3$ provides the crucial argument for a general $n$. Set
\[
x_{i^*}\left(V_{i^*1}+V_{i^*2}+V_{i^*3}\right) = \max_{1\le i\le d}\left(x_i\left(V_{i1}+V_{i2}+V_{i3}\right)\right).
\]
We have the obvious inequalities
\begin{align*}
x_{i^*}\left(V_{i^*1}+V_{i^*2}\right) &\le \max_{1\le i\le d}\left(x_i\left(V_{i1}+V_{i2}\right)\right),\\
x_{i^*}\left(V_{i^*1}+V_{i^*3}\right) &\le \max_{1\le i\le d}\left(x_i\left(V_{i1}+V_{i3}\right)\right),\\
x_{i^*}\left(V_{i^*2}+V_{i^*3}\right) &\le \max_{1\le i\le d}\left(x_i\left(V_{i2}+V_{i3}\right)\right).
\end{align*}
Summing up these inequalities we obtain
\begin{align*}
&2x_{i^*}\left(V_{i^*1}+V_{i^*2}+V_{i^*3}\right)\\
&\le \max_{1\le i\le d}\left(x_i\left(V_{i1}+V_{i2}\right)\right) + \max_{1\le i\le d}\left(x_i\left(V_{i1}+V_{i3}\right)\right) + \max_{1\le i\le d}\left(x_i\left(V_{i2}+V_{i3}\right)\right).
\end{align*}
Taking expectations on both sides yields
\[
E\left(\max_{1\le i\le d}\left(x_i\left(V_{i1}+V_{i2}+V_{i3}\right)\right)\right) \le\frac 32 E\left(\max_{1\le i\le d}\left(x_i\left(V_{i1}+V_{i2}\right)\right)\right),
\]
which proves the assertion for $n=3$. Repeating the preceding arguments
provides the assertion for a general $n$: Set
\[
x_{i^*}\sum_{j=1}^n V_{i^*j} =\max_{1\le i\le d} \left(x_i\sum_{j=1}^n V_{ij}\right).
\]
We have for all subsets $T\subset\set{1,\dots,n}$ with $n-1$ elements, i.e.,
$\abs T=n-1$,
\[
x_{i^*}\sum_{j\in T} V_{i^*j} \le \max_{1\le i\le d} \left(x_i\sum_{j\in T} V_{ij}\right).
\]
Summing up these $n$ inequalities we obtain
\[
(n-1)\, x_{i^*}\sum_{j=1}^n V_{i^*j} \le \sum_{T\subset\set{1,\dots,n},\abs T=n-1}  \max_{1\le i\le d} \left(x_i\sum_{j\in T} V_{ij}\right).
\]
Taking expectations on both sides now yields the assertion:
\[
E\left(\max_{1\le i\le d}\left(x_i\sum_{j=1}^n V_{ij}\right)\right) \le \frac n{n-1} E\left(\max_{1\le i\le d}\left(x_i\sum_{j=1}^{n-1} V_{ij}\right)\right).
\]
\end{proof}

The preceding result and the convolution theorem of the gamma distribution
provide the following bounds of the Dirichlet $D$-norm and the monotonicity
in $\alpha$.

\begin{prop}[Arbitrary dimension] \label{prop:monotonicity_bounds_of_Dirichlet-norm}
The Dirichlet $D$-norm $\norm\cdot_{D(\alpha)}$ is decreasing in $\alpha>0$,
i.e., we have for arbitrary $\bfx\in\R^d$
\[
\norm{\bfx}_{D(\alpha_1)}\ge \norm{\bfx}_{D(\alpha_2)},\qquad 0<\alpha_1\le \alpha_2.
\]
Moreover $\bfx\in\R^d$ and $0<\alpha_1<\alpha_2$ imply
\begin{equation*} 
  \alpha_1\norm\bfx_{D(\alpha_1)}
  \le \alpha_2\norm\bfx_{D(\alpha_2)}
  \le \alpha_1\norm\bfx_{D(\alpha_1)} + (\alpha_2-\alpha_1)\norm\bfx_{D(\alpha_2-\alpha_1)}.
\end{equation*}
\end{prop}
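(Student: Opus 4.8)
The plan is to work throughout with the representation \eqref{eqn:representation_of_Dirichlet_D-norm}, namely $\alpha\norm\bfx_{D(\alpha)}=E(\max_{1\le i\le d}(\abs{x_i}V_i))$ for independent and identically gamma$(\alpha)$-distributed rv $V_1,\dots,V_d$, together with the convolution theorem for the gamma distribution (if $V$ is gamma$(\beta_1)$-distributed and $W$ an independent gamma$(\beta_2)$-distributed rv, then $V+W$ is gamma$(\beta_1+\beta_2)$-distributed). I would first dispose of the two-sided bound, which follows directly and does not even use Lemma~\ref{lem:auxiliary_result_monotonicity}. Fix $\bfx\in\R^d$, $0<\alpha_1<\alpha_2$, and put $\beta:=\alpha_2-\alpha_1>0$; take independent iid families $V_1,\dots,V_d$ (gamma$(\alpha_1)$) and $W_1,\dots,W_d$ (gamma$(\beta)$). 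By the convolution theorem $V_1+W_1,\dots,V_d+W_d$ are iid gamma$(\alpha_2)$, so \eqref{eqn:representation_of_Dirichlet_D-norm} gives $\alpha_2\norm\bfx_{D(\alpha_2)}=E(\max_{1\le i\le d}(\abs{x_i}(V_i+W_i)))$. Since $\abs{x_i}W_i\ge0$ we have $\max_{1\le i\le d}(\abs{x_i}(V_i+W_i))\ge\max_{1\le i\le d}(\abs{x_i}V_i)$, and taking expectations yields the lower bound $\alpha_2\norm\bfx_{D(\alpha_2)}\ge\alpha_1\norm\bfx_{D(\alpha_1)}$. For the upper bound use $\max_{1\le i\le d}(\abs{x_i}V_i+\abs{x_i}W_i)\le\max_{1\le i\le d}(\abs{x_i}V_i)+\max_{1\le i\le d}(\abs{x_i}W_i)$ and take expectations to obtain $\alpha_2\norm\bfx_{D(\alpha_2)}\le\alpha_1\norm\bfx_{D(\alpha_1)}+\beta\norm\bfx_{D(\beta)}$, which is the asserted sandwich.

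For the monotonicity I would first treat rational parameters. Given rationals $0<q_1\le q_2$, write $q_1=m\gamma$ and $q_2=n\gamma$ with a rational $\gamma>0$ and positive integers $m\le n$ (take $\gamma$ to be the reciprocal of a common denominator). Expressing a gamma$(k\gamma)$-distributed rv as a sum of $k$ independent gamma$(\gamma)$-distributed ones, \eqref{eqn:representation_of_Dirichlet_D-norm} gives, for an iid array $(V_{ij})$ of gamma$(\gamma)$-distributed rv,
\[
\norm\bfx_{D(k\gamma)}=\frac{1}{k\gamma}\,E\Bigl(\max_{1\le i\le d}\Bigl(\abs{x_i}\sum_{j=1}^k V_{ij}\Bigr)\Bigr)=\frac1\gamma\,E\Bigl(\max_{1\le i\le d}\Bigl(\abs{x_i}\frac1k\sum_{j=1}^k V_{ij}\Bigr)\Bigr).
\]
Applying Lemma~\ref{lem:auxiliary_result_monotonicity} to the array $(V_{ij})$ and the numbers $\abs{x_1},\dots,\abs{x_d}$, reducing the number of summands from $n$ down to $m$ one step at a time, yields $\norm\bfx_{D(n\gamma)}\le\norm\bfx_{D(m\gamma)}$, that is, $\norm\bfx_{D(q_1)}\ge\norm\bfx_{D(q_2)}$.

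It then remains to pass to real parameters, and this is where the only genuine technical work lies: I need continuity of $\alpha\mapsto\norm\bfx_{D(\alpha)}$ on $(0,\infty)$. I would obtain it by coupling: realize $V_i=F_\alpha^{-1}(U_i)$ with fixed iid rv $U_1,\dots,U_d$ uniform on $(0,1)$. Since $(\alpha,x)\mapsto F_\alpha(x)$ is jointly continuous and strictly increasing in $x$, the quantile map $\alpha\mapsto F_\alpha^{-1}(u)$ is continuous for each $u\in(0,1)$, so along any sequence $\alpha_k\to\alpha_0$ one has $\max_{1\le i\le d}(\abs{x_i}V_i)\to\max_{1\le i\le d}(\abs{x_i}V_i^{0})$ almost surely, where $V_i^{0}:=F_{\alpha_0}^{-1}(U_i)$. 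On a compact parameter interval these variables are bounded in $L^2$ because $\max_{1\le i\le d}(\abs{x_i}V_i)\le\norm\bfx_\infty\sum_{i=1}^dV_i$ with $\sum_{i=1}^dV_i$ gamma$(d\alpha)$-distributed; hence they are uniformly integrable and $\alpha\mapsto E(\max_{1\le i\le d}(\abs{x_i}V_i))$, and therefore $\norm\bfx_{D(\alpha)}=\frac1\alpha E(\max_{1\le i\le d}(\abs{x_i}V_i))$, is continuous. (Equivalently, the same almost sure coupling shows that the Dirichlet generators converge in distribution as $\alpha_k\to\alpha_0$, whence the corresponding $D$-norms converge by the metrization results of Section~\ref{sec:metrization_of_D-norm-space}.) Given continuity, for arbitrary $0<\alpha_1\le\alpha_2$ I would pick rationals $r_k\to\alpha_1$ and $s_k\to\alpha_2$ with $r_k\le s_k$, apply the rational case to get $\norm\bfx_{D(r_k)}\ge\norm\bfx_{D(s_k)}$, and let $k\to\infty$. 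The main obstacle is therefore not the algebra---that is entirely encapsulated in Lemma~\ref{lem:auxiliary_result_monotonicity} together with the gamma convolution---but the continuity step, i.e. the justification of the interchange of limit and expectation through the uniform-integrability bound (and the minor bookkeeping of choosing the rational approximants with the right ordering).
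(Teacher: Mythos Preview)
Your proposal is correct and follows essentially the same approach as the paper: both derive the sandwich from the elementary bound $\max_i\abs{x_i}V_i\le\max_i\abs{x_i}(V_i+W_i)\le\max_i\abs{x_i}V_i+\max_i\abs{x_i}W_i$ together with the gamma convolution theorem, and both obtain monotonicity by reducing to a common grid $\varepsilon\N$, applying Lemma~\ref{lem:auxiliary_result_monotonicity}, and invoking continuity of $\alpha\mapsto\norm\bfx_{D(\alpha)}$. The only differences are cosmetic: you argue directly on rationals and then pass to the limit, whereas the paper argues by contradiction, and you supply a full uniform-integrability justification of continuity, which the paper simply asserts.
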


\begin{proof}
Choose $\bfx\in\R^d$ and put $g(\alpha):=\norm{\bfx}_{D(\alpha)}$,
$\alpha>0$. Note that the function $g$ is continuous. Suppose that there
exist $0<\alpha_1<\alpha_2$ with $g(\alpha_1)<g(\alpha_2)$. By the continuity
of $g$ we can find $\varepsilon>0$ and $k,n\in\N$, $k<n$, such that
$g(\varepsilon k)<g(\varepsilon n)$. Let $V_{ij}$, $1\le i\le d$, $1\le j\le
n$, be an array of independent and identically gamma distributed rv with
parameter $\varepsilon >0$. The convolution theorem of the gamma distribution
now implies
\[
g(\varepsilon k)= E\left(\max_{1\le i\le d}\left(\abs{x_i}\frac{\sum_{j=1}^k V_{ij}}{\varepsilon k}\right)\right) < g(\varepsilon n) =  E\left(\max_{1\le i\le d}\left(\abs{x_i}\frac{\sum_{j=1}^n V_{ij}}{\varepsilon n}\right)\right),
\]
which contradicts Lemma \ref{lem:auxiliary_result_monotonicity}.

Consider $0<\alpha_1<\alpha_2$ and let $V_1,\dots,V_d,W_1,\dots,W_d$ be
independent rv such that $V_i$ is gamma distributed with parameter $\alpha_1$
and $W_i$ is gamma distributed with parameter $\alpha_2-\alpha_1$, $1\le i\le
d$. The second assertion follows from
\eqref{eqn:representation_of_Dirichlet_D-norm}, the relation
\begin{align*}
  E\left( \max_{1\le i\le d} (\abs{x_i} V_i) \right)
  &\le E\left( \max_{1\le i\le d} (\abs{x_i} (V_i+W_i)) \right) \\
  &\le E\left( \max_{1\le i\le d} (\abs{x_i} V_i) + \max_{1\le i\le d} (\abs{x_i} W_i) \right)
\end{align*}
and the convolution theorem of the gamma distribution.
\end{proof}

\begin{lemma}\label{lem:generator_constant_of_exponential_gamma}
Let $V_1\dots,V_d$ be iid standard exponential distributed rv. The Dirichlet
$D$-norm $\norm\cdot_{D(1)}$ on $\R^d$ with generator
\[
\bfZ=d\left(\frac{V_i}{V_1+\dots+V_d}\right)_{i=1}^d
\]
has generator constant
\[
\norm{\bfone}_{D(1)}=\sum_{k=1}^d\frac 1k.
\]
\end{lemma}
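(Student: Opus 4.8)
The plan is to reduce the generator constant to the expectation $E\bigl(\max_{1\le i\le d}V_i\bigr)$ of the maximum of $d$ iid standard exponential rv, and then to evaluate this expectation by a one-line induction on $d$ based on the lack-of-memory property of the exponential distribution.

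First I would apply the representation \eqref{eqn:representation_of_Dirichlet_D-norm} with $\bfx=\bfone$ and $\alpha=1$. Since $\gamma_1$ is precisely the standard exponential density, it gives at once
\[
\norm{\bfone}_{D(1)}=E\left(\max_{1\le i\le d}V_i\right),
\]
so everything comes down to showing $E\bigl(\max_{1\le i\le d}V_i\bigr)=\sum_{k=1}^d 1/k$.

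For this I would put $M_d:=\max_{1\le i\le d}V_i$ and $V_{(1)}:=\min_{1\le i\le d}V_i$. By the min-stability of the exponential distribution already invoked in this section, $V_{(1)}$ is exponential with $E(V_{(1)})=1/d$; almost surely a unique index attains the minimum, and, conditionally on its value and on which index it is, the residual lifetimes of the other $d-1$ rv are, by the lack-of-memory property, again iid standard exponential and independent of $V_{(1)}$. Writing $M_d$ as $V_{(1)}$ plus the maximum of those $d-1$ residuals then shows that $M_d$ has the same distribution as $V_{(1)}+M_{d-1}$ with independent summands, hence $E(M_d)=1/d+E(M_{d-1})$ for $d\ge 2$, while $E(M_1)=E(V_1)=1$. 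Unwinding the recursion gives $E(M_d)=\sum_{k=1}^d 1/k$, as claimed.

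The argument is entirely elementary; the only step demanding a little care is the classical fact (R\'enyi's representation of exponential order statistics) that the residuals above form an independent family of standard exponentials that is independent of the minimum. If one prefers to bypass it, an equally short alternative is the tail computation
\[
E(M_d)=\int_0^\infty\bigl(1-(1-e^{-t})^d\bigr)\,dt=\sum_{k=1}^d\binom{d}{k}\frac{(-1)^{k+1}}{k},
\]
followed by the identification of the last sum with $\sum_{k=1}^d 1/k$ via $\int_0^1\frac{1-(1-x)^d}{x}\,dx=\sum_{j=0}^{d-1}\int_0^1(1-x)^j\,dx$; I would likely relegate this variant to a remark.
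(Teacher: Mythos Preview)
Your argument is correct and is essentially the same as the paper's. Both reduce $\norm{\bfone}_{D(1)}$ to $E(V_{d:d})$ via \eqref{eqn:representation_of_Dirichlet_D-norm} and then compute this expectation using the memoryless property of the exponential distribution; the paper invokes the full R\'enyi--Sukhatme representation $V_{i:d}=V_{i-1:d}+V_i/(d-i+1)$ in one stroke (citing \citet{balabasu1996}) and sums expectations, whereas you carry out the same decomposition inductively by peeling off the minimum---you yourself identify this as R\'enyi's representation, so there is no substantive difference.
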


The generator constant of a general \emph{bivariate} Dirichlet $D$-norm was
computed in Lemma \ref{dirichlet_generator_constant}. To the best of our
knowledge, the preceding result, with $\alpha=1$, provides the only exact
computation of $\norm{\bfone}_{D(\alpha)}$ for arbitrary dimension.  Some
representation is given by \citet{nada08}.

\begin{proof}
The following argument is taken from \citet[Section 33.3]{balabasu1996}.
Using the memoryless property of the exponential distribution one can
generate order statistics $V_{1:d}\le \dots \le V_{d:d}$ from the standard
exponential distribution as follows:
\begin{itemize}
  \item Generate $d$ independent standard exponential distributed rv $V_1,\dots,V_d$.
  \item Then
      \[
      V_{i:d}:= V_{i-1:d}+ \frac{V_i}{d-i+1},\qquad i=1,\dots,d,
      \]
      with $V_{0:d}=0$ are the required order statistics.
\end{itemize}
Hence we obtain
\[
\norm{\bfone}_{D(1)}=E(V_{d:d})=\sum_{i=1}^d\frac{E(V_i)}{d-i+1}=\sum_{i=1}^d \frac 1i.
\]
\end{proof}

The fact that the function $m(\alpha)$ is continuous and decreasing with
$\lim_{\alpha\downarrow 0}m(\alpha)=d$, $\lim_{\alpha\uparrow
\infty}m(\alpha)=1$ shows that the family of symmetric Dirichlet
distributions is a parametric family of generators of $D$-norms in arbitrary
dimension, which attains each value between independence
($\norm{\bfone}_D=d$) and complete dependence ($\norm{\bfone}_D=1$). The
generator of the symmetric Dirichlet distribution is well-known and easy to
simulate. This makes the family of symmetric Dirichlet distributions quite an
attractive parametric model of $D$-norms.

\end{document}